\newcommand{\acts}{\mathrel{\reflectbox{$\righttoleftarrow$}}}
\newcommand{\g}{{\mathbf{g}}}
\newcommand{\Db}{{\mathcal{D}}}
\newcommand{\E}{\mathbbm{E}}
\newcommand{\fns}{\footnotesize}
\newtheorem{lemma}{Lemma}[section]
\newtheorem{theorem}{Theorem}[section]
\newtheorem{corollary}[theorem]{Corollary}
\title{Combinatorics of a dissimilarity measure for pairs \\ of draws from discrete probability vectors \\ on finite sets of objects}
\author{Zarif Ahsan\thanks{Department of Biology, Stanford University, Stanford, CA 94305 USA} , Xiran Liu$^*$, Noah A.~Rosenberg$^*$}
\begin{document}
\maketitle

%%%%%%%%%%%%%%%%%%%%%%%%%% Abstract %%%%%%%%%%%%%%%%%%%%%%%%%%%%%%%%%%%
\begin{abstract}
\noindent  Motivated by a problem in population genetics, we examine the combinatorics of dissimilarity for pairs of random unordered draws of multiple objects, with replacement, from a collection of distinct objects. Consider two draws of size $K$ taken with replacement from a set of $I$ objects, where the two draws represent samples from potentially distinct probability distributions over the set of $I$ objects. We define the set of \emph{identity states} for pairs of draws via a series of actions by permutation groups, describing the enumeration of all such states for a given $K \geq 2$ and $I \geq 2$. Given two probability vectors for the $I$ objects, we compute the probability of each identity state. From the set of all such probabilities, we obtain the expectation for a dissimilarity measure, finding that it has a simple form that generalizes a result previously obtained for the case of $K=2$. We determine when the expected dissimilarity between two draws from the same probability distribution exceeds that of two draws taken from different probability distributions. We interpret the results in the setting of the genetics of polyploid organisms, those whose genetic material contains many copies of the genome ($K > 2$).
\end{abstract}

%%%%%%%%%%%%%%%%%%%%%%%%%%%%% Section 1 %%%%%%%%%%%%%%%%%%%%%%%%%%%%%%%%
\section{Introduction}

%%%%%%%%%%%%%%%%%%%%%%%%%%%%% Section 1.1 %%%%%%%%%%%%%%%%%%%%%%%%%%%%%%
\subsection{Biological motivation}

In many species including humans, the complete genetic material of an individual contains two copies of the genome, one inherited from each of two parents. For a specific location in the genome, a \emph{locus}, many variant types, or \emph{alleles}, might exist. At a locus, the \emph{genotype} of an individual is the unordered pair of its alleles, a multiset containing two elements. For example, an individual inheriting allele $A$ from one parent and allele $B$ from the other has genotype $AB$ (equivalently, $BA$). 

Humans are \emph{diploid}: each individual has two copies of the genome. Some organisms are \emph{haploid}, with only one copy. Others, including perhaps half of plant species~\citep{HeslopHarrison2023}, are \emph{polyploid}, possessing 4, 6, 8, 10, 12, or as many as 96 genomic copies~\citep{Nagalingum2016}; this polyploidy is the outcome of past genome duplication and hybridization events. The \emph{ploidy} of an organism is its number of copies of the genome; in an organism with ploidy $K$, a genotype is a multiset with $K$ elements.

Population-genetic studies often seek to understand features of the alleles in a population of individuals and the differences in alleles and their frequencies among two or more populations. Dissimilarity scores between genotypes or populations of genotypes are often used for such computations~\citep{ChakrabortyAndJin1993, MountainAndRamakrishnan2005, Rosenberg2011, Liu2023}. Given two diploid genotypes sampled from probability distributions on a set of alleles, what is the distribution of a dissimilarity score between the genotypes? The dissimilarity for two genotypes is a discrete value that depends on the number of identical alleles between the genotypes, in a manner we define in Section \ref{sec:notation}. If allele frequencies are taken into account to weight the discrete values by their probabilities, then the dissimilarity ranges continuously over $[0,1]$. The diploid genotypes can be samples from the same probability distribution, for two draws from the same population of individuals, or from different distributions, for draws from different populations.

The mathematical study of population-genetic dissimilarity statistics has focused on diploid genotypes. For example, for two dissimilarity statistics, \cite{Liu2023} obtained the expected dissimilarity for two diploid genotypes sampled from the same population and for two genotypes sampled from different populations (eq.~22). In other words, for two unordered draws of size two, with replacement, from two specified distributions,  \cite{Liu2023} obtained the expectation of the dissimilarity. Such computations help to understand how allele frequency distributions give rise to the dissimilarity scores used in population-genetic data analysis.

The mathematical statistics of polyploid organisms has been explored to a lesser extent than that of diploids, and our interest here is in generalizing computations of population-genetic dissimilarity to arbitrary ploidy. We seek to obtain the expected value of the dissimilarity for two genotypes of specified ploidy sampled from the same population and for two genotypes sampled from different populations, In other words, for two unordered draws of size $K$, with replacement, from two specified probability distributions, what is the expectation of the dissimilarity? 

%%%%%%%%%%%%%%%%%%%%%%%%%%%%% Section 1.2 %%%%%%%%%%%%%%%%%%%%%%%%%%%%%%
\subsection{Problem description}

The problem of interest can be understood beyond the population genetics context in a general combinatorial setting. Consider two random, unordered draws of $K$ items with replacement from a set of $I$ objects, where each of the $I$ objects is associated with a probability. The two draws potentially come from populations with different frequencies for the $I$ objects. The $i$th object has probability $p_i$ in one population and $q_i$ in the other, where $p_i \geq 0$, $q_i \geq 0$, and $\sum_{i=1}^I p_i=\sum_{i=1}^I q_i=1$. In the context of genetics, $K$ is the ploidy, $I$ is the number of distinct alleles, and $p_i$ and $q_i$ are allele frequencies in populations 1 and 2, respectively.

We seek to solve three problems. First, we find the number of distinct \emph{identity states}, up to relabeling of the alleles, where the identity states correspond to equivalence classes of pairs of unordered draws. Next, for each identity state, we compute its probability and its associated dissimilarity. Finally, we find the expected dissimilarity between two individuals selected at random, one from one probability distribution and one from another. We consider the general case in which the two draws are taken from different probability distributions; we also consider the special case in which the probability distributions are the same. In the genetic context, the former case corresponds to comparing genotypes from different populations and the latter  to comparing genotypes from the same population.

%%%%%%%%%%%%%%%%%%%%%%%%%%%%% Section 1.3 %%%%%%%%%%%%%%%%%%%%%%%%%%%%%%
\subsection{Notation}
\label{sec:notation}

Let $\mathcal{A}_I$ be a set of $I$ distinct objects, which we label $A_i$ for $i\in\{1,2,\ldots,I\}$. The Cartesian product $\mathcal{A}_I^K$ describes an \emph{ordered} draw of $K$ elements from $\mathcal{A}_I$, with replacement. We have a group action $S_K\acts \mathcal{A}_I^K$ given by permuting the order of elements for each $(A_{i_1}, A_{i_2}, \ldots, A_{i_K})\in\mathcal{A}_I^K$. We can then define the space of \emph{unordered} draws of size $K$ from $\mathcal{A}_I$, denoted by $\mathcal{G}_I^K$, as a quotient by this group action, that is, $\mathcal{G}_I^K = \mathcal{A}_I^K/S_K$. 

In general, we indicate ordered draws with the letter $X$ and unordered draws with the letter $G$. Two ordered draws $X_1=(X_1^1, X_1^2, \ldots, X_1^K)$ and $X_2=(X_2^1, X_2^1, \ldots, X_2^K)$  correspond to the same unordered draw
if they are contained in the same orbit by the $S_K$ action. Each class $G\in\mathcal{G}^K_I$ can be uniquely represented by a vector $\g=(g^{(1)}, g^{(2)}, \ldots, g^{(I)})$, where $g^{(i)}$ is the count of object $A_i$ in $G$ and $\sum_{i=1}^I g^{(i)} = K$. 

In our population-genetic example, $\mathcal{A}_I$ denotes the set of alleles present in at least one of two populations. $A_i$ is the label for the $i$th allele, and $p_i$ and $q_i$ are the frequencies of $A_i$ in the two populations. Each $G\in\mathcal{G}^K_I$ is a possible genotype. 

We define a dissimilarity $\Db:\mathcal{G}^K_I \times \mathcal{G}^K_I \rightarrow [0,1]$ for pairs of unordered draws. For draws $G_1$ and $G_2$,
\begin{align}
\label{eq:dg1g2}
    \Db(G_1,G_2)=1-\frac{1}{K^2}\sum_{1\leq i, j \leq K} \mathbbm{1}_{G_1^{(i)} = G_2^{(j)}}.
\end{align}
Equivalently, with vector representations $\g_1$ and $\g_2$ for $G_1$ and $G_2$, 
\begin{align}
    \Db(\g_1,\g_2)=1-\frac{1}{K^2}\langle \g_1, \g_2 \rangle,
\end{align}
where $\langle \cdot, \cdot \rangle$ is the standard inner product. 

Note that $\Db$ is not a distance metric, as for any $K \geq 2$ and $I \geq 2$, for the vector $\g=(c,K-c,0,\ldots,0)$ with positive $c$, we have $\Db(\g,\g)=1-\frac{1}{K^2}[c^2+(K-c)^2] \neq 0$. Note also that although it is convenient to consider the range for $\Db$ to be the full unit interval, the range is only the set $\{0, {1}/{K^2}, {2}/{K^2}, \ldots, (K^2-1)/{K^2}, 1\}$.

Across all possible $\g_1,\g_2$, the minimum value of $\Db=0$ is reached if and only if all objects have the same kind: $\g_1 = \g_2 = K \mathbf{e}_i$, where $\mathbf{e}_i$ is the vector with 1 in the $i$th position and 0 in the remaining $I-1$ positions. The maximum value of $\Db=1$ is reached if and only if the two draws share no common items: that is, if and only if there is no value of $i$ for which $\g_1$ and $\g_2$ both have nonzero values.

$\Db$ generalizes the diploid dissimilarity termed $D_2$ by \cite{Liu2023}, reducing to $D_2$ if $K=2$. Example draws with $K=2$ and $I=4$ appear in Table~\ref{tab:example-draws}. Notation appears in Table \ref{tab:notation}.  
%%%%%%%%%%%%%%%%%%%% Table 1 %%%%%%%%%%%%%%%%%%%%%%%%%%%%%%%%%%%%%%%%%%%%%%%%%
\begin{table}[tb]
\caption{Pairs of example draws $(G_1,G_2)$, their vector representations $(\g_1,\g_2)$, and the dissimilarities ($\Db$) between them for $K=2$ and $I=4$.}\label{tab:example-draws}
\vspace{0.2cm}
    \centering
    \begin{tabular}{|c|c|c|c|c|}
    \hline
        $G_1$       & $G_2$       & $\g_1$      & $\g_2$      & $\Db$ \\[0.8ex] \hline
        $A_1 A_1$ & $A_1 A_1$ & $(2,0,0,0)$ & $(2,0,0,0)$ & 0     \\[0.8ex]
        $A_1 A_1$ & $A_1 A_2$ & $(2,0,0,0)$ & $(1,1,0,0)$ & $\frac{1}{2}$ \\[0.8ex] 
        $A_1 A_1$ & $A_2 A_2$ & $(2,0,0,0)$ & $(0,2,0,0)$ & 1     \\[0.8ex]
        $A_1 A_1$ & $A_2 A_3$ & $(2,0,0,0)$ & $(0,1,1,0)$ & 1     \\[0.8ex]
        $A_1 A_2$ & $A_1 A_2$ & $(1,1,0,0)$ & $(1,1,0,0)$ & $\frac{1}{2}$ \\[0.8ex]
        $A_1 A_2$ & $A_1 A_3$ & $(1,1,0,0)$ & $(1,0,1,0)$ & $\frac{3}{4}$ \\[0.8ex]   
        $A_1 A_2$ & $A_3 A_4$ & $(1,1,0,0)$ & $(0,0,1,1)$ & 1     \\[0.8ex] \hline
    \end{tabular}
\end{table}
%%%%%%%%%%%%%%%%%%%%%%%%%%%%%%%%%%%%%%%%%%%%%%%%%%%%%%%%%%%%%%%%%%%%%%%%%%%%%%%

%%%%%%%%%%%%%%%%%%%% Table 2 %%%%%%%%%%%%%%%%%%%%%%%%%%%%%%%%%%%%%%%%%%%%%%%%%
\begin{table}[tb]
\caption{Notation table.}\label{tab:notation}
\vspace{.2cm}
    \centering
    \begin{tabular}{|l|p{5.4in}|}
    \hline
Symbol & Description  \\ \hline
$K$    & Number of items drawn with replacement from a set of objects (ploidy) \\
$I$    & Number of distinct objects that can be sampled (number of distinct alleles) \\ 
$p_i$  & Probability that object $i$ is drawn in population 1 (frequency of allele $i$ in population 1) \\
$q_i$  & Probability that object $i$ is drawn in population 2 (frequency of allele $i$ in population 2) \\ 
$A_i$  & $i$th object in a set of $I$ distinct objects (allelic type $i$) \\
$\mathcal{A}_I$   & Set of $I$ distinct objects (set of distinct allelic types) \\
$\mathcal{A}_I^K$ & Set of ordered draws of size $K$ from a set of $I$ distinct objects \\
$S_K$  & Group action that permutes the order of elements in vectors belonging to $\mathcal{A}_I^K$ \\
$\mathcal{G}_I^K$ & Set of unordered draws of size $K$ \\
$\mathcal{C}_I^K$ & Set of unordered pairs of unordered draws of size $K$, after permutation of the object labels \\
$\mathcal{D}$ & Dissimilarity computed between pairs of unordered size-$K$ draws from a set of $I$ objects \\
$X$  & Ordered draw of size $K$ from a set of $I$ distinct objects \\
$G$  & Unordered draw of size $K$ from a set of $I$ distinct objects \\
$\g$ & Vector that counts in a draw of size $K$ the numbers of copies of the $I$ elements 
\\
$\mathbf{r}$ & Unordered list of nonzero entries in a vector \\
$\hat{\g}$        & Ordered pair of unordered size-$K$ draws from a set of $I$ objects \\
$M$               & $(K+1)\times (K+1)$ matrix representation of $\hat{\g}$ \\
$\mathcal{M}$     & set of all $(K+1)\times (K+1)$ matrix representations in $\mathcal{C}_I^K$\\
$[\hat{\g}]_\sim$ & Unordered pair of unordered size-$K$ draws from a set of $I$ objects \\  
$N(\hat{\g})$     & Number of nonzero columns in a $2 \times I$ matrix $\hat{\g}$ \\
$[\hat{\g}]$      & Identity state associated with an ordered pair of unordered draws $\hat{\g}$ \\  
\hline
\end{tabular}
\end{table}
%%%%%%%%%%%%%%%%%%%%%%%%%%%%%%%%%%%%%%%%%%%%%%%%%%%%%%%%%%%%%%%%%%%%%%%%%%%%%%%

%%%%%%%%%%%%%%%%%%%%%%%%%%%%% Section 2 %%%%%%%%%%%%%%%%%%%%%%%%%%%%%%%%%%%%%%%
\section{Enumeration of identity states}
\label{sec:enumeration}

Our first step toward studying probabilities of identity states is to enumerate the identity states, the equivalence classes for pairs of unordered draws of size $K$ with replacement from a set of $I$ objects, up to relabeling; it is convenient to define identity states for \emph{unordered} pairs of unordered draws. We define the possible identity states through a series of group actions. Recall that $\mathcal{G}^K_I$, the set of unordered draws of size $K$ from a set of $I$ objects, with replacement, is defined as $\mathcal{G}^K_I = \mathcal{A}^K_I/S_K$, where $\mathcal{A}^K_I$ is the set of ordered draws and $S_K$ is the group action that permutes the order of elements of $\mathcal{A}^K_I$.

$\mathcal{G}^K_I \times \mathcal{G}^K_I$ corresponds to \textit{ordered} pairs of unordered draws. We can uniquely represent an element $(G_1,G_2)\in\mathcal{G}^K_I \times \mathcal{G}^K_I$ by concatenating the vectors $\g_1$ and $\g_2$ to construct a $2\times I$ matrix, denoted  
\begin{equation}\label{eq:pair-matrix-form}
    \hat{\g}=\begin{pmatrix}
    \g_1 \\ \g_2
    \end{pmatrix} =
     \begin{pmatrix}
    g_1^{(1)}&g_1^{(2)}&\ldots&g_1^{(I)} \\ g_2^{(1)}&g_2^{(2)}&\ldots&g_2^{(I)}
    \end{pmatrix}.
\end{equation}
The dissimilarity function $\Db$ is symmetric, so that the order of the two draws can be reversed without changing its value. It is therefore convenient to define identity states in terms of an unordered pair of vectors rather than an ordered pair. Importantly, our notion of dissimilarity does not depend on the particular labeling of objects, that is, the assignment of the labels $A_1, A_2, \ldots, A_I$ to the $I$ objects. The dissimilarity between $A_1 A_1$ and $A_1 A_2$ would not be changed if we permuted labels to, say, $A_3 A_3$ and $A_3 A_5$. 

To account for the symmetries---due to the order of the two draws and to the $K$ elements in a draw---we define a group action $S_2 \times S_I \acts \mathcal{G}^K_I \times \mathcal{G}^K_I$. The $S_2$ corresponds to the symmetry of $\Db$ in its two arguments and the $S_I$ to the symmetry due to relabeling of the objects. Specifically, for a permutation $\tau\in S_I$, $\tau: m \mapsto n $ for $m,n\in\{1,2,\ldots,I\}$ if and only if $\tau \big( (G_1, G_2)\big)$ replaces all $A_m$ in $(G_1,G_2)$ with $A_n$.

As an example, consider $K=2$ and $I=4$. The element $\big( (12),1\big)\in S_2 \times S_I$ converts the pair of draws $(A_1 A_2, A_3 A_4)$ to $(A_3 A_4, A_1 A_2)$, switching the order of the draws. Element $\big(1,(23)\big)\in S_2 \times S_I$ converts $(A_1 A_2, A_3 A_4)$ to $(A_1 A_3, A_2 A_4)$, switching the labels $A_2$ and $A_3$. Element $\big( (12), (23) \big)\in S_2 \times S_I$ converts $(A_1 A_2, A_3 A_4)$ to $(A_2 A_4, A_1 A_3)$, switching the order of the draws and switching the labels of $A_2$ and $A_3$. 

Representing $(G_1,G_2)$ in the matrix form $\hat{\g}$, the group action permutes the rows and columns of the matrix. The first component permutes rows, and the second component permutes columns. Hence, considering the same example, we can represent $(A_1 A_2, A_3 A_4)$ as the matrix 
$\begin{psmallmatrix}
1 & 1 & 0 & 0 \\
0 & 0 & 1 & 1
\end{psmallmatrix}.$
The element $\big( (12),1 \big)$ takes this matrix to
$\begin{psmallmatrix}
0 & 0 & 1 & 1 \\
1 & 1 & 0 & 0
\end{psmallmatrix}.$
The element $\big( 1, (23) \big)$ takes it to
$\begin{psmallmatrix}
1 & 0 & 1 & 0 \\
0 & 1 & 0 & 1
\end{psmallmatrix}.$
The element $\big( (12), (23) \big)$ takes it to
$\begin{psmallmatrix}
0 & 1 & 0 & 1 \\
1 & 0 & 1 & 0 
\end{psmallmatrix}.$

The quotient of $\mathcal{G}^K_I \times \mathcal{G}^K_I$ by the group action $S_2 \times S_I$ corresponds to all unordered pairs of unordered draws up to relabeling. We denote this quotient by $\mathcal{C}^K_I=(\mathcal{G}^K_I \times \mathcal{G}^K_I) /(S_2 \times S_I)$. Note that although we combine the $S_2$ and $S_I$ symmetries as one group action here, it will also be useful to separate them into distinct actions $S_2\acts\mathcal{G}^K_I$ and $S_I \acts \mathcal{G}^K_I$. These actions are simply the induced actions of the two groups $S_2$ and $S_I$ as subgroups of $S_2 \times S_I$. That is, for $\sigma \in S_I$, $\sigma(\hat{\g}) = (1,\sigma) (\hat{\g})$ for $(1,\sigma) \in S_2 \times S_I $.

Each  $\hat{\g}\in\mathcal{G}^K_I \times \mathcal{G}^K_I$ has an associated element in  $\mathcal{C}^K_I$, its associated \emph{identity state}, which we denote $[\hat{\g}]$. Hence, the enumeration of identity states consists in counting distinct elements $[\hat{\g}]$ in $\mathcal{C}^K_I$: the number of $2\times I$ nonnegative integer matrices whose rows sum to $K$, up to permutation of rows and columns.

A lemma clarifies that the number of elements of $\mathcal{C}^K_I$ does not depend on $I$, provided $I \geq 2K$. The lemma encodes the idea that because in total, $2K$ items are contained in a pair of draws, at most $2K$ labels appear in those draws. 
\begin{lemma}
    For $I\geq2K$, $\big|\mathcal{C}^K_I \big| = \big|\mathcal{C}^K_{2K} \big|$.
\end{lemma}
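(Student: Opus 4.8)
The plan is to construct, for each $I \geq 2K$, an explicit bijection between $\mathcal{C}^K_I$ and $\mathcal{C}^K_{2K}$ that simply deletes or appends all-zero columns. The key preliminary observation is that any representative matrix $\hat{\g}$ of an element of $\mathcal{C}^K_I$ has at most $2K$ nonzero columns: each of its two rows is a list of nonnegative integers summing to $K$, hence has at most $K$ nonzero entries, so at most $2K$ columns are nonzero in at least one row, i.e.\ $N(\hat{\g}) \leq 2K$. Moreover $N$ is constant on $(S_2 \times S_I)$-orbits, since permuting rows or columns does not change which columns are identically zero, so $N$ descends to a well-defined function on $\mathcal{C}^K_I$.

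I would then define $\phi \colon \mathcal{C}^K_I \to \mathcal{C}^K_{2K}$ by picking any representative $\hat{\g}$, deleting $I - 2K$ of its columns chosen from among the zero columns (there are at least $I - 2K$ of them since $N(\hat{\g}) \leq 2K$), and passing to the $(S_2 \times S_{2K})$-orbit of the resulting $2 \times 2K$ matrix, which still has both row sums equal to $K$ because only zero columns were removed. In the other direction, define $\psi \colon \mathcal{C}^K_{2K} \to \mathcal{C}^K_I$ by appending $I - 2K$ zero columns to any representative and passing to the $(S_2 \times S_I)$-orbit. Granting that both maps are well defined, it is immediate that $\psi \circ \phi$ and $\phi \circ \psi$ are the respective identity maps (deleting zero columns and then re-appending them, or vice versa, returns the same orbit), and therefore $\big| \mathcal{C}^K_I \big| = \big| \mathcal{C}^K_{2K} \big|$.

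The substantive point, and the step I expect to require the most care, is verifying that $\phi$ and $\psi$ are well defined, i.e.\ independent of the choice of representative and, for $\phi$, of which zero columns are deleted. The clean way to handle this is to record the structural description of an orbit: regarding a $2 \times I$ matrix as a list of $I$ column vectors in $\mathbb{Z}_{\geq 0}^2$, the $S_I$ factor merely permutes the list, so only the multiset of columns matters, while the $S_2$ factor swaps the two coordinates of every column simultaneously, and an all-zero column $(0,0)$ is fixed by this swap and contributes only to the multiplicity of $(0,0)$. Hence the orbit of $\hat{\g}$ within $\mathcal{C}^K_I$ is determined by, and determines, the swap-equivalence class of the multiset of its \emph{nonzero} columns (the width being the same for every element of $\mathcal{C}^K_I$), and any such class that arises---necessarily of size at most $2K \leq I$---is realized at both widths $2K$ and $I$ by padding with zero columns. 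The map $\phi$ returns exactly this swap-class, now carried by a width-$2K$ matrix, so it depends only on $[\hat{\g}]$; $\psi$ re-realizes the same class at width $I$; and the two are mutually inverse bijections. One could equally phrase the entire argument by exhibiting, for each $I \geq 2K$, a single bijection from $\mathcal{C}^K_I$ onto the fixed set of swap-classes of multisets of nonzero columns, but the padding description ties most directly to the statement.
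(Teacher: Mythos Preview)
Your proposal is correct and follows essentially the same approach as the paper: both arguments rest on the observation that a $2\times I$ representative has at most $2K$ nonzero columns and that appending or deleting all-zero columns yields a bijection between $\mathcal{C}^K_I$ and $\mathcal{C}^K_{2K}$. Your version is in fact more careful than the paper's, which simply asserts the bijection after noting the zero-column padding, whereas you explicitly verify well-definedness via the multiset-of-nonzero-columns description and check that $\phi$ and $\psi$ are mutual inverses.
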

\begin{proof}
Consider a matrix $\hat{\g}\in\mathcal{G}^K_I \times \mathcal{G}^K_I$ for $I\geq 2K$. Because $\hat{\g}$ has nonnegative entries and its rows $\g_1$ and $\g_2$ each sum to $K$, each row of $\hat{\g}$ has at most $K$ nonzero entries, and $\hat{\g}$ has at most $2K$ nonzero columns. 

By permutation of the columns of $\hat{\g}$, we obtain a representative $[\hat{\g}]$ whose last $I-2K$ columns each contain a pair of zeroes, and whose leftmost $2K$ columns give a $2\times 2K$ nonnegative-integer matrix with rows summing to $K$: an element of $\mathcal{G}^K_{2K}\times \mathcal{G}^K_{2K}$. We therefore have a bijection between $\mathcal{C}^K_I$ and $\mathcal{C}^K_{2K}$, namely
\begin{align*}
       [\hat{\mathbf{g}}] & \longmapsto 
       \big[ \begin{pmatrix} 
       \hat{\mathbf{g}} & 
0 \atop 0 & 0 \atop 0 & \cdots \atop \cdots & 0 \atop 0        
        \end{pmatrix} \big].
\end{align*}
The number of elements in $\mathcal{C}^K_I$ is therefore equal to the number of elements in $\mathcal{C}^K_{2K}$.
\end{proof}

We perform exhaustive enumeration to obtain $\big|\mathcal{C}^K_{2K}\big|$. Algorithmically, we proceed as follows. 
\begin{enumerate}
    \item  Enumerate potential entries for $\hat{\g}_1$: we first enumerate all unordered partitions $\mathbf{p}_1$ of $K$. For each unordered partition, we enumerate all ordered partitions $\mathbf{p}_1^\prime$. For each ordered partition, we enumerate all ${2K \choose |\mathbf{p}_1|}$ placements $\mathbf{p}_1^{\prime\prime}$ of the ordered sequence of $|\mathbf{p}_1|$ nonzero elements in a vector of length $2K$.

    \item  Enumerate potential entries for $\hat{\g}_2$: we next enumerate all unordered partitions $\mathbf{p}_2$ of $K$.. For each unordered partition, we enumerate all ordered partitions $\mathbf{p}_2^\prime$. For each ordered partition, we enumerate all ${2K \choose |\mathbf{p}_2|}$ placements $\mathbf{p}_2^{\prime\prime}$ of the ordered sequence of $|\mathbf{p}_2|$ nonzero elements in a vector of length $2K$.

    \item Construct a matrix that is invariant with respect to the permutation of the columns of the potential pair $\hat{\g}$: for each pair $(\mathbf{p}_1^{\prime\prime}, \mathbf{p}_2^{\prime\prime})$, we construct the $(K+1)\times(K+1)$ matrix $M(\mathbf{p}_1^{\prime\prime}, \mathbf{p}_2^{\prime\prime})$ with rows and columns indexed from 1 to $K+1$, where entry $M_{ij}$ is the number of indices $k=1,2,\ldots,2K$ for which $(\mathbf{p}_1^{\prime\prime})_k=i-1$ and $(\mathbf{p}_2^{\prime\prime})_k=j-1$.

    \item Tabulate identity states by accounting for exchanges of the rows of the potential pair $\hat{\g}$: we identify all unique matrices $M$ produced at the end of Step 3. If $M$ appears, then its transpose also appears, as a result of an exchange of the associated $\mathbf{p}_1^{\prime\prime}$ and $\mathbf{p}_2^{\prime\prime}$. The number of identity states is the number of symmetric matrices plus half the number of asymmetric matrices.
\end{enumerate}

Step 3 of this enumeration develops a nearly one-to-one way to represent each element $[\hat{\g}]\in\mathcal{C}^K_I$ as a $(K+1) \times (K+1)$ matrix $M$. Indeed, given a matrix $\hat{\g}$, consider the map $\hat{\g} \mapsto M$, where $M$ is a $(K+1)\times(K+1)$ matrix, with $M_{ij}$ indicating the count of the column $\begin{psmallmatrix} i-1 \\ j-1 \end{psmallmatrix}$ in $\hat{\g}$. The column order of $\hat{\g}$ does not matter for this map, so that the map descends to a map on the quotient $\mathcal{G}^K_I \times \mathcal{G}^K_I / S_I$. 
We seek to descend to a map $\mathcal{G}^K_I \times \mathcal{G}^K_I / (S_2\times S_I)$. This descent, however, is not immediate. Consider, for instance, $K=2$ and $I=4$. We have the mapping of matrices 
\begin{align*}
    \begin{pmatrix}
        2 & 0 & 0 & 0 \\
        1 & 1 & 0 & 0
    \end{pmatrix} &\mapsto
    \begin{pmatrix}
        2 & 1 & 0 \\
        0 & 0 & 0 \\
        0 & 1 & 0        
    \end{pmatrix}\\
    \begin{pmatrix}
        1 & 1 & 0 & 0 \\
        2 & 0 & 0 & 0
    \end{pmatrix} &\mapsto
    \begin{pmatrix}
        2 & 0 & 0 \\
        1 & 0 & 1 \\
        0 & 0 & 0        
    \end{pmatrix}.
\end{align*}
These two matrices are equivalent by our $S_2\times S_I$ action---in particular, the $S_2$ component action. However, it can be seen that $S_2$ acts on the image of these maps by transposing the $3 \times 3$ matrix. Thus, if we let $M$ be a $(K+1)\times(K+1)$ matrix \textit{up to a transpose}, then the map $\hat{\g} \mapsto M$ associates an element of $\mathcal{G}^K_I \times \mathcal{G}^K_I /( S_2\times S_I) = \mathcal{C}^K_I$ with a matrix. Formally, to obtain a bijection between identity states and matrices, we consider the image of the matrix mapping, which is the set
\begin{align*}
    \mathcal{M} = \bigg\{ M \in \text{Mat}_{(K+1)\times(K+1)}(\mathbb{Z}_{\geq 0}): \sum_{i=1}^{K+1} \sum_{j=1}^{K+1} M_{ij} = I \text{ and } \sum_{i=1}^{K+1} \sum_{j=1}^{K+1} (i-1) M_{ij} = \sum_{i=1}^{K+1} \sum_{j=1}^{K+1} (j-1) M_{ij} = K\bigg\}.
\end{align*}
In particular, we have a bijection between $\mathcal{C}^K_I$ and $\mathcal{M}/^T$, where $^T$ denotes action by transposing the matrices.

The computation time in the algorithm can be reduced by restricting attention in Step 1, without loss of generality, to a single ordering $\mathbf{p}_1^\prime$ for each unordered partition $\mathbf{p}_1$, and a single placement $\mathbf{p}_1^{\prime\prime}$ of its nonzero elements in a vector of length $2K$---say, with the entries of the unordered partition placed in nonincreasing order, starting in the first element of the vector, and with vector completed by zeroes.

The enumeration produces 2, 7, 21, 66, 192, and 565 identity states for $K=1$, 2, 3, 4, 5, and 6. $\big|\mathcal{C}^K_{2K}\big|$ follows A331722 in the On-Line Encyclopedia of Integer Sequences~\citep{oeis}. Table \ref{tab:3prob-combs-k2} displays the enumeration for the case of $K=2$, providing the 7 elements in $\mathcal{C}^2_4$. The table provides the $(K+1) \times (K+1)$ square matrix form for the entries of Table \ref{tab:example-draws}. Table \ref{tab:4prob-combs-k3} provides the 21 elements in $\mathcal{C}^3_6$. For $1 \leq I \leq 2K$,  $\big|\mathcal{C}^K_{I}\big|$ can be obtained from the $\big|\mathcal{C}^K_{2K}\big|$ identity states by counting states for which the number of nonzero columns in the $2 \times 2K$ matrix is less than or equal to $I$ (Table \ref{tab:5}).

%%%%%%%%%%%%%%%%%%%%%%%%%%%%%%%%% Table 3 %%%%%%%%%%%%%%%%%%%%%%%%%%%%%%%%
\clearpage
\begin{longtable}[tb]{|l|c|c|c|c|c|l|}\caption{Enumeration of identity states in $\mathcal{C}^K_I$ for $K=2$, $I \geq 4$, each given by an associated pair of draws $[\hat{\g}]$ and a square matrix $M$; dissimilarities $\Db$ and probabilities are also shown.}\label{tab:3prob-combs-k2} \\
\hline
        Case & $G_1$ & $G_2$ & $[\hat{\g}]$ & $M$ & $\Db$&Probability \\\hline
        1 & $A_1A_1$ & $A_1A_1$ & $\fns{\begin{pmatrix} 2&0&0&0\\2&0&0&0 \end{pmatrix}}$ & $\fns{\begin{pmatrix} 3&0&0\\0&0&0 \\ 0&0&1 \end{pmatrix}}$ & $0$&$\sum_{i_1}^I {p_{i_1}^2 q_{i_1}^2 }$\\[0.8ex] \hline
        2 & $A_1A_1$ & $A_1A_2$ & $\fns{\begin{pmatrix} 2&0&0&0\\1&1&0&0 \end{pmatrix}}$ & $\fns{\begin{pmatrix} 2&1&0\\0&0&0 \\ 0&1&0 \end{pmatrix}}$ & $\frac{1}{2}$&$ 2 \sum_{i_1\neq i_2}^I {p_{i_1}^2 q_{i_1} q_{i_2} + p_{i_1} p_{i_2} q_{i_1}^2 }$\\[0.8ex] \hline
        3 & $A_1A_1$ & $A_2A_2$ & $\fns{\begin{pmatrix} 2&0&0&0\\0&2&0&0 \end{pmatrix}}$ & $\fns{\begin{pmatrix} 2&0&1\\0&0&0 \\ 1&0&0 \end{pmatrix}}$ & $1$&$\sum_{i_1\neq i_2}^I {p_{i_1}^2 q_{i_2}^2 }$\\[0.8ex] \hline
        4 & $A_1A_1$ & $A_2A_3$ & $\fns{\begin{pmatrix} 2&0&0&0\\0&1&1&0 \end{pmatrix}}$ & $\fns{\begin{pmatrix} 1&2&0\\0&0&0 \\ 1&0&0 \end{pmatrix}}$ & $1$&$\sum_{i_1\neq i_2\neq i_3}^I {p_{i_1}^2 q_{i_2} q_{i_3} + p_{i_2} p_{i_3} q_{i_1}^2 }$\\[0.8ex] \hline
        5 & $A_1A_2$ & $A_1A_2$ & $\fns{\begin{pmatrix} 1&1&0&0\\1&1&0&0 \end{pmatrix}}$ & $\fns{\begin{pmatrix} 2&0&0\\0&2&0 \\ 0&0&0  \end{pmatrix}}$ & $\frac{1}{2}$&$2\sum_{i_1\neq i_2}^I {p_{i_1} p_{i_2} q_{i_1} q_{i_2} }$\\[0.8ex] \hline
        6 & $A_1A_2$ & $A_1A_3$ &  $\fns{\begin{pmatrix} 1&1&0&0\\1&0&1&0 \end{pmatrix}}$ & $\fns{\begin{pmatrix} 1&1&0\\1&1&0 \\ 0&0&0 \end{pmatrix}}$ & $\frac{3}{4}$&$4\sum_{i_1\neq i_2\neq i_3}^I {p_{i_1} p_{i_2} q_{i_1} q_{i_3} }$\\[0.8ex] \hline
        7 & $A_1A_2$ & $A_3A_4$ & $\fns{\begin{pmatrix} 1&1&0&0\\0&0&1&1 \end{pmatrix}}$ & $\fns{\begin{pmatrix} 0&2&0\\2&0&0 \\ 0&0&0 \end{pmatrix}}$ & $1$&$\sum_{i_1\neq i_2\neq i_3\neq i_4}^I {p_{i_1} p_{i_2} q_{i_3} q_{i_4} }$\\[0.8ex] \hline
\end{longtable}
%%%%%%%%%%%%%%%%%%%%%%%%%%%%%%%%%%%%%%%%%%%%%%%%%%%%%%%%%%%%%%%%%%%%%%%%%%

%%%%%%%%%%%%%%%%%%%%%%%%%%%%% Section 3 %%%%%%%%%%%%%%%%%%%%%%%%%%%%%%%%%%%%%
\section{Probabilities of identity states}

The first draw of size $K$ is taken from $\mathcal{A}^K_I$ with probability $p_i$ for drawing the $i$th object $A_i$. The second size-$K$ draw is taken from $\mathcal{A}^K_I$ with probability $q_i$ for drawing $A_i$. We find the probability of each identity state by finding the probability up to each of the two symmetries captured in our group action: the first, the $S_2$ component, leads us to find the probability of an unordered pair of unordered draws. The second, the $S_I$ component, leads us to find the probability that considers relabelings of the $I$ objects.

%%%%%%%%%%%%%%%%%%%%%%%%%%%%% Section 3.1 %%%%%%%%%%%%%%%%%%%%%%%%%%%%%%%%%%%
\subsection{Probability of an unordered pair of unordered draws}

Denote by $\g_1$ the first unordered draw, and let $X_1$ be an ordering of $\g_1$. Similarly, denote by $\g_2$ the second unordered draw, and let $X_2$ be an ordering of $\g_2$. The probabilities of the ordered draws $X_1$ and $X_2$ are
\begin{align*}
    \mathbbm{P}[X_1]=\prod_{i=1}^I p_i^{g_1^{(i)}},\\
    \mathbbm{P}[X_2]=\prod_{i=1}^I q_i^{g_2^{(i)}}.
\end{align*}
The probability of the unordered draw $\g_1$ is obtained by summing across all possible orderings, the permutations $X_1$ that produce vector $\g_1$. Each ordering has the same probability. The number of orderings is the multinomial coefficient ${K \choose g_1^{(1)},g_1^{(2)},\ldots,g_1^{(I)}} = K! \big/ ( \prod_{i=1}^I {g_1^{(i)}!}) $. We have 
\begin{align*}
    \mathbbm{P}[\g_1]  &= {K \choose g_1^{(1)},g_1^{(2)},\ldots,g_1^{(I)}}\prod_{i=1}^I p_i^{g_1^{(i)}}, \\   
    \mathbbm{P}[\g_2]  &= {K \choose g_2^{(1)},g_2^{(2)},\ldots,g_2^{(I)}}\prod_{i=1}^I q_i^{g_2^{(i)}}.        
\end{align*}

\vspace{-0.25cm}
%%%%%%%%%%%%%%%%%%%%%%%%%%%%%%%%%% Table 4 %%%%%%%%%%%%%%%%%%%%%%%%%%%%%%%
\clearpage
\begin{landscape}
\begin{longtable}[!ht]{|l|c|c|c|c|c|l|}\caption{Enumeration of identity states in $\mathcal{C}^K_I$ for $K=2$, $I \geq 4$, 
each given by an associated pair of draws $[\hat{\g}]$ and a square matrix $M$; dissimilarities $\Db$ and probabilities are also shown.}\label{tab:4prob-combs-k3} \\
    \hline
        Case& $G_1$ & $G_2$ & $[\hat{\g}]$ & M & $\Db$ & Probability \\[0.8ex] \hline
        1 & $A_1A_1A_1$ & $A_1A_1A_1$ & ${\fns\begin{pmatrix} 3&0&0&0&0&0\\3&0&0&0&0&0 \end{pmatrix}}$ & ${\fns\begin{pmatrix} 5&0&0&0\\0&0&0&0 \\ 0&0&0&0 \\ 0&0&0&1 \end{pmatrix}}$ & $0$&$\sum_{i_1}^I {p_{i_1}^3 q_{i_1}^3 }$\\[0.8ex] \hline
        2 & $A_1A_1A_1$ & $A_1A_1A_2$ &  ${\fns\begin{pmatrix} 3&0&0&0&0&0\\2&1&0&0&0&0 \end{pmatrix}}$ & ${\fns\begin{pmatrix} 4&1&0&0\\0&0&0&0 \\ 0&0&0&0 \\ 0&0&1&0 \end{pmatrix}}$ & $\frac{1}{3}$&$3\sum_{i_1\neq i_2}^I {p_{i_1}^3 q_{i_1}^2 q_{i_2}+ p_{i_1}^2 p_{i_2}q_{i_1}^3}$\\[0.8ex] \hline
        3 & $A_1A_1A_1$ & $A_1A_2A_2$ & ${\fns\begin{pmatrix} 3&0&0&0&0&0\\1&2&0&0&0&0 \end{pmatrix}}$ & ${\fns\begin{pmatrix} 4&0&1&0\\0&0&0&0 \\ 0&0&0&0 \\ 0&1&0&0 \end{pmatrix}}$ & $\frac{2}{3}$&$3\sum_{i_1\neq i_2}^I {p_{i_1}^3 q_{i_1} q_{i_2}^2 + p_{i_1} p_{i_2}^2 q_{i_1}^3 }$\\[0.8ex] \hline
        4 & $A_1A_1A_1$ & $A_1A_2A_3$ &  ${\fns\begin{pmatrix} 3&0&0&0&0&0\\1&1&1&0&0&0 \end{pmatrix}}$ & ${\fns\begin{pmatrix} 3&2&0&0\\0&0&0&0 \\ 0&0&0&0 \\ 0&1&0&0 \end{pmatrix}}$ & $\frac{2}{3}$&$3\sum_{i_1\neq i_2\neq i_3}^I {p_{i_1}^3 q_{i_1} q_{i_2} q_{i_3} + p_{i_1} p_{i_2} p_{i_3} q_{i_1}^3 }$\\[0.8ex] \hline
        5 & $A_1A_1A_1$ & $A_2A_2A_2$ & ${\fns\begin{pmatrix} 3&0&0&0&0&0\\0&3&0&0&0&0 \end{pmatrix}}$ & ${\fns\begin{pmatrix} 4&0&0&1\\0&0&0&0 \\ 0&0&0&0 \\ 1&0&0&0 \end{pmatrix}}$ & $1$&$\sum_{i_1\neq i_2}^I {p_{i_1}^3 q_{i_2}^3}$\\[0.8ex] \hline
        6 & $A_1A_1A_1$ & $A_2A_2A_3$ & ${\fns\begin{pmatrix} 3&0&0&0&0&0\\0&2&1&0&0&0 \end{pmatrix}}$ & ${\fns\begin{pmatrix} 3&1&1&0\\0&0&0&0 \\ 0&0&0&0 \\ 1&0&0&0 \end{pmatrix}}$ & $1$&$3\sum_{i_1\neq i_2\neq i_3}^I {p_{i_1}^3 q_{i_2}^2 q_{i_3} + p_{i_2}^2 p_{i_3} q_{i_1}^3 }$\\[0.8ex] \hline
        7 & $A_1A_1A_1$ & $A_2A_3A_4$ &  ${\fns\begin{pmatrix} 3&0&0&0&0&0\\0&1&1&1&0&0 \end{pmatrix}}$ & ${\fns\begin{pmatrix} 2&3&0&0\\0&0&0&0 \\ 0&0&0&0 \\ 1&0&0&0 \end{pmatrix}}$ & $1$&$\sum_{i_1\neq i_2\neq i_3\neq i_4}^I {p_{i_1}^3 q_{i_2} q_{i_3} q_{i_4} + p_{i_2} p_{i_3} p_{i_4} q_{i_1}^3 }$\\[0.8ex] \hline
        8 & $A_1A_1A_2$ & $A_1A_1A_2$ & ${\fns\begin{pmatrix} 2&1&0&0&0&0\\2&1&0&0&0&0 \end{pmatrix}}$ & ${\fns\begin{pmatrix} 4&0&0&0\\0&1&0&0 \\ 0&0&1&0 \\ 0&0&0&0 \end{pmatrix}}$ & $\frac{4}{9}$&$9\sum_{i_1\neq i_2}^I {p_{i_1}^2 p_{i_2} q_{i_1}^2 q_{i_2} }$\\[0.8ex] \hline
        9 & $A_1A_1A_2$ & $A_1A_1A_3$ & ${\fns\begin{pmatrix} 2&1&0&0&0&0\\2&0&1&0&0&0 \end{pmatrix}}$ & ${\fns\begin{pmatrix} 3&1&0&0\\1&0&0&0 \\ 0&0&1&0 \\ 0&0&0&0 \end{pmatrix}}$ & $\frac{5}{9}$&$9\sum_{i_1\neq i_2\neq i_3}^I {p_{i_1}^2 p_{i_2} q_{i_1}^2 q_{i_3} }$\\[0.8ex] \hline
        10 & $A_1A_1A_2$ & $A_1A_2A_2$ & ${\fns\begin{pmatrix} 2&1&0&0&0&0\\1&2&0&0&0&0 \end{pmatrix}}$ & ${\fns\begin{pmatrix} 4&0&0&0\\0&0&1&0 \\ 0&1&0&0 \\ 0&0&0&0 \end{pmatrix}}$ & $\frac{5}{9}$&$9\sum_{i_1\neq i_2}^I {p_{i_1}^2 p_{i_2} q_{i_1} q_{i_2}^2 }$\\[0.8ex] \hline
        11 & $A_1A_1A_2$ & $A_1A_2A_3$ &  ${\fns\begin{pmatrix} 2&1&0&0&0&0\\1&1&1&0&0&0 \end{pmatrix}}$ & ${\fns\begin{pmatrix} 3&1&0&0\\0&1&0&0 \\ 0&1&0&0 \\ 0&0&0&0 \end{pmatrix}}$ & $\frac{2}{3}$&$18\sum_{i_1\neq i_2\neq i_3}^I {p_{i_1}^2 p_{i_2} q_{i_1} q_{i_2} q_{i_3} + p_{i_1} p_{i_2} p_{i_3} q_{i_1}^2 q_{i_2} }$\\[0.8ex] \hline
        12 & $A_1A_1A_2$ & $A_1A_3A_3$ & ${\fns\begin{pmatrix} 2&1&0&0&0&0\\1&0&2&0&0&0 \end{pmatrix}}$ & ${\fns\begin{pmatrix} 3&0&1&0\\1&0&0&0 \\ 0&1&0&0 \\ 0&0&0&0 \end{pmatrix}}$ & $\frac{7}{9}$&$9\sum_{i_1\neq i_2\neq i_3}^I {p_{i_1}^2 p_{i_2} q_{i_1} q_{i_3}^2 + p_{i_1} p_{i_3}^2 q_{i_1}^2 q_{i_2}}$\\[0.8ex] \hline
        13 & $A_1A_1A_2$ & $A_1A_3A_4$ &  ${\fns\begin{pmatrix} 2&1&0&0&0&0\\1&0&1&1&0&0 \end{pmatrix}}$ & ${\fns\begin{pmatrix} 2&2&0&0\\1&0&0&0 \\ 0&1&0&0 \\ 0&0&0&0 \end{pmatrix}}$ & $\frac{7}{9}$&$9\sum_{i_1\neq i_2\neq i_3\neq i_4}^I {p_{i_1}^2 p_{i_2} q_{i_2} q_{i_3} q_{i_4} + p_{i_2} p_{i_3} p_{i_4} q_{i_1}^2 q_{i_2}^2 }$\\[0.8ex] \hline
        14 & $A_1A_1A_2$ & $A_2A_3A_3$ & ${\fns\begin{pmatrix} 2&1&0&0&0&0\\0&1&2&0&0&0 \end{pmatrix}}$ & ${\fns\begin{pmatrix} 3&0&1&0\\0&1&0&0 \\ 1&0&0&0 \\ 0&0&0&0 \end{pmatrix}}$ & $\frac{8}{9}$&$9\sum_{i_1\neq i_2\neq i_3}^I {p_{i_1}^2 p_{i_2} q_{i_2} q_{i_3}^2}$\\[0.8ex] \hline
        15 & $A_1A_1A_2$ & $A_2A_3A_4$ & ${\fns\begin{pmatrix} 2&1&0&0&0&0\\0&1&1&1&0&0 \end{pmatrix}}$ & ${\fns\begin{pmatrix} 2&2&0&0\\0&1&0&0 \\ 1&0&0&0 \\ 0&0&0&0 \end{pmatrix}}$ & $\frac{8}{9}$&$9\sum_{i_1\neq i_2\neq i_3\neq i_4}^I {p_{i_1}^2 p_{i_2} q_{i_2} q_{i_3} q_{i_4} + p_{i_2} p_{i_3} p_{i_4} q_{i_1}^2 q_{i_2}}$\\[0.8ex] \hline
        16 & $A_1A_1A_2$ & $A_3A_3A_4$ &  ${\fns\begin{pmatrix} 2&1&0&0&0&0\\0&0&2&1&0&0 \end{pmatrix}}$ & ${\fns\begin{pmatrix} 2&1&1&0\\1&0&0&0 \\ 1&0&0&0 \\ 0&0&0&0 \end{pmatrix}}$  & $1$&$9\sum_{i_1\neq i_2\neq i_3\neq i_4}^I {p_{i_1}^2 p_{i_2} q_{i_3}^2 q_{i_4}}$\\[0.8ex] \hline
        17 & $A_1A_1A_2$ & $A_3A_4A_5$ & ${\fns\begin{pmatrix} 2&1&0&0&0&0\\0&0&1&1&1&0 \end{pmatrix}}$ & ${\fns\begin{pmatrix} 1&3&0&0\\1&0&0&0 \\ 1&0&0&0 \\ 0&0&0&0 \end{pmatrix}}$ & $1$&$3\sum_{i_1\neq i_2\neq i_3\neq i_4\neq i_5}^I {p_{i_1}^2 p_{i_2} q_{i_3} q_{i_4} q_{i_5} + p_{i_3} p_{i_4} p_{i_5} q_{i_1}^2 q_{i_2}}$\\[0.8ex] \hline
        18 & $A_1A_2A_3$ & $A_1A_2A_3$ & ${\fns\begin{pmatrix} 1&1&1&0&0&0\\1&1&1&0&0&0 \end{pmatrix}}$ & ${\fns\begin{pmatrix} 3&0&0&0\\0&3&0&0 \\ 0&0&0&0 \\ 0&0&0&0 \end{pmatrix}}$ & $\frac{2}{3}$&$6\sum_{i_1\neq i_2\neq i_3}^I {p_{i_1} p_{i_2} p_{i_3} q_{i_1} q_{i_2} q_{i_3} }$\\[0.8ex] \hline
        19 & $A_1A_2A_3$ & $A_1A_2A_4$ &  ${\fns\begin{pmatrix} 1&1&1&0&0&0\\1&1&0&1&0&0 \end{pmatrix}}$ & ${\fns\begin{pmatrix} 2&1&0&0\\1&2&0&0 \\ 0&0&0&0 \\ 0&0&0&0 \end{pmatrix}}$  & $\frac{7}{9}$&$18\sum_{i_1\neq i_2\neq i_3\neq i_4}^I {p_{i_1} p_{i_2} p_{i_3} q_{i_1} q_{i_2} q_{i_4}}$\\[0.8ex] \hline
        20 & $A_1A_2A_3$ & $A_1A_4A_5$ & ${\fns\begin{pmatrix} 1&1&1&0&0&0\\1&0&0&1&1&0 \end{pmatrix}}$ & ${\fns\begin{pmatrix} 1&2&0&0\\2&1&0&0 \\ 0&0&0&0 \\ 0&0&0&0 \end{pmatrix}}$ & $\frac{8}{9}$&$9\sum_{i_1\neq i_2\neq i_3\neq i_4\neq i_5}^I {p_{i_1} p_{i_2} p_{i_3} q_{i_1} p_{i_4} p_{i_5}}$\\[0.8ex] \hline
        21 & $A_1A_2A_3$ & $A_4A_5A_6$ & ${\fns\begin{pmatrix} 1&1&1&0&0&0\\0&0&0&1&1&1 \end{pmatrix}}$ & ${\fns\begin{pmatrix} 0&3&0&0\\3&0&0&0 \\ 0&0&0&0 \\ 0&0&0&0 \end{pmatrix}}$ & $1$&$\sum_{i_1\neq i_2\neq i_3\neq i_4\neq i_5\neq i_6}^I {p_{i_1} p_{i_2} p_{i_3} q_{i_4} q_{i_5} q_{i_6} }$\\[0.8ex] \hline
\end{longtable}
\end{landscape}
%%%%%%%%%%%%%%%%%%%%%%%%%%%%%%%%%%%%%%%%%%%%%%%%%%%%%%%%%%%%%%%%%%%%%%%%%%%%%%%

\newpage
%%%%%%%%%%%%%%%%%%%%%%%% Table 5 %%%%%%%%%%%%%%%%%%%%%%%%%%%%%%%%%%%%%%
\begin{longtable}[tb]{|r|c|c|c|c|c|c|}
\caption{$\big|\mathcal{C}_I^K \big|$, the number of identity states for unordered pairs of unordered draws of size $K$ from a set of $I$ objects, for small values of $K$. For $I \geq 2K$, $\big|\mathcal{C}_I^K \big| = \big|\mathcal{C}_{2K}^K \big|$.}\label{tab:5}
\\\hline
    & \multicolumn{6}{|c|}{$K$} \\ \hline
$I$ & 1 & 2 &  3 &  4 &   5 &   6 \\ \hline
1   & 1 & 1 &  1 &  1 &   1 &   1 \\ \hline
2   & 2 & 4 &  6 &  9 &  12 &  16 \\ \hline
3   &   & 6 & 13 & 26 &  46 &  79 \\ \hline
4   &   &   & 18 & 45 &  96 & 200 \\ \hline
5   &   &   & 20 & 57 & 140 & 333 \\ \hline
6   &   &   & 21 & 63 & 169 & 440 \\ \hline
7   &   &   &    & 65 & 183 & 506 \\ \hline
8   &   &   &    & 66 & 189 & 541 \\ \hline
9   &   &   &    &    & 191 & 556 \\ \hline
10  &   &   &    &    & 192 & 562 \\ \hline
11  &   &   &    &    &     & 564 \\ \hline
12  &   &   &    &    &     & 565 \\ \hline
\end{longtable}

Therefore, the probability of obtaining an \textit{ordered} pair consisting of draws $\g_1$ and $\g_2$ is given by the product $\mathbbm{P}[\hat{\g}] = \mathbbm{P}[\g_1] \, \mathbbm{P}[\g_2]$, or 
\begin{align}
    \mathbbm{P}[\hat{\g}] &={K \choose g_1^{(1)},g_1^{(1)},\ldots,g_1^{(I)}} {K \choose g_2^{(1)},g_2^{(2)},\ldots,g_2^{(I)}}\prod_{i=1}^I p_i^{g_1^{(i)}} q_i^{g_2^{(i)}}.
\end{align}

To obtain the probability of an \textit{unordered} pair, we must consider the case in which $\g_1$ is drawn from probability distribution $(q_1,q_2, \ldots, q_I)$ and $\g_2$ is drawn from  $(p_1,p_2,\ldots,p_I)$. If $\g_1=\g_2$, then this case is identical to the previous one. If $\g_1 \neq \g_2$, then we combine two cases into a single event $[\hat{\g}]_\sim$ to compute the probability of the unordered pair of unordered draws:
\begin{align}
     \mathbbm{P}\big[[\hat{\g}]_\sim] &= \frac{1}{1+ \mathbbm{1}_{\g_1=\g_2}} \bigg[\mathbbm{P}\Big[{\g_1 \choose \g_2}\Big] +  \mathbbm{P}\Big[{\g_2 \choose \g_1}\Big]\bigg]
     \nonumber \\
     &= \frac{1}{1+ \mathbbm{1}_{\g_1=\g_2}}{K \choose g_1^{(1)},g_1^{(2)},\ldots,g_1^{(I)}} {K \choose g_2^{(1)},g_2^{(2)},\ldots,g_2^{(I)}}\bigg(\prod_{i=1}^I p_i^{g_1^{(i)}} q_i^{g_2^{(i)}} + \prod_{i=1}^I p_i^{g_2^{(i)}} q_i^{g_1^{(i)}} \bigg).
\label{eq:row}
\end{align}
Here, we sum across the two possible orderings of the draws, dividing by $1+ \mathbbm{1}_{\g_1=\g_2}$ to account for the case in which the two draws are identical. 

%%%%%%%%%%%%%%%%%%%%%%%%%%%%% Section 3.2 %%%%%%%%%%%%%%%%%%%%%%%%%%%%%%%%%%%
\subsection{Relabeling the objects}

To find the probability of an identity state $[\hat{\g}]\in\mathcal{C}^K_I$, we must sum probabilities of all compositions of a row arrangement and a labeling of the objects that give rise to the same identity state. 

Let $N(\hat{\g})$ be the number of distinct objects in $\hat{\g}$, or equivalently, the number of nonzero columns in its associated 2-row matrix. Without loss of generality, we can assume that the nonzero columns are the first $N(\hat{\g})$ columns of $\hat{\g}$, as we can always permute the columns to obtain such a matrix. A relabeling of the objects $A_i$ corresponds to a permutation  of their frequencies $(p_i,q_i)$. Therefore, to find the probability of $[\hat{\g}]$, we sum probabilities for the two row arrangements across all distinct reassignments of $(p_i,q_i)$. We have already addressed the possibility of two distinct row arrangements in eq.~\ref{eq:row}. We therefore obtain
\begin{align}
\label{eq:diff-pop-case-prob}
\mathbb{P}\big[ [\hat{\g}] \big]= C_N(\hat{\g}) \, {K \choose g_1^{(1)},g_1^{(2)},\ldots,g_1^{(N(\hat{\g}))}} {K \choose g_2^{(1)},g_2^{(2)},\ldots,g_2^{(N(\hat{\g}))}} \!\!\!\!\!\sum_{\;\;\;\;\; i_1 \neq i_2 \neq \ldots \neq i_{N(\hat{\g})}}
\frac{ \prod_{j=1}^{N(\hat{\g})} p_{i_j}^{g_1^{(j)}} q_{i_j}^{g_2^{(j)}} + \prod_{j=1}^{N(\hat{\g})} p_{i_j}^{g_2^{(j)}} q_{i_j}^{g_1^{(j)}} }{1+ \mathbbm{1}_{\g_1=\g_2}},
\end{align}
where $\sum_{i_1 \neq i_2 \neq \ldots \neq i_k}$ is a sum over all possible vectors $(i_1, i_2, \ldots, i_k)$ that represent permutations of $\{1,2,\ldots, k\}$. 

The coefficient $C_N(\hat{\g})$ accounts for distinct vectors  $(i_1, i_2, \ldots, i_{N(\hat{\g})})$ that repeat the probability in the summand. To compute this coefficient, we write $\g_1 \sim \g_2$ if there exists $\sigma \in S_{N(\hat{\g})}$ that maps $\g_1 \in \mathcal{G}_I^K$ to $\g_2 \in \mathcal{G}_I^K$. In other words, $\g_1 \sim \g_2$ if and only if $\g_1$ and $\g_2$ are constructed from the same unordered partition of $K$. In terms of a matrix with rows $\g_1$ and $\g_2$, $\g_1 \sim \g_2$ if and only if the two rows are the same up to a permutation of elements. Let $r(\g)$ be a vector-valued function of length $K+1$ indexed from $0$ to $K$, where $r_k$, $k=0,1,\ldots,K$, is the number of entries in the unordered size-$K$ draw $\g$ that equal $k$. Then $\g_1 \sim \g_2$ if and only if $r(\g_1)=r(\g_2)$.

Computation of $C_N(\hat{\g})$ uses the size of a stabilizer subgroup. For identical $\g_1, \g_2$, a permutation $\sigma \in S_{N(\hat{\g})}$ tabulates the same probability in the summand of eq.~\ref{eq:diff-pop-case-prob} as the identity permutation if and only if $\sigma(\hat{\g})=\hat{\g}$. For distinct $\g_1, \g_2$, $\sigma \in S_{N(\hat{\g})}$ tabulates the same probability as the identity if $\sigma(\hat{\g})=\hat{\g}$. If $\hat{\g}_1 \neq \hat{\g}_2$ but $\hat{\g}_1 \sim \hat{\g}_2$, however, then each product $\prod_{j=1}^{N(\hat{\g})} p_{i_j}^{g_1^{(j)}} q_{i_j}^{g_2^{(j)}}$ reappears as $\prod_{j=1}^{N(\hat{\g})} p_{i_j}^{g_2^{(j)}} q_{i_j}^{g_1^{(j)}}$ if $\sigma{(\g_1)=\g_2}$, and each product $\prod_{j=1}^{N(\hat{\g})} p_{i_j}^{g_2^{(j)}} q_{i_j}^{g_1^{(j)}}$ reappears as  $\prod_{j=1}^{N(\hat{\g})} p_{i_j}^{g_1^{(j)}} q_{i_j}^{g_2^{(j)}}$ if $\sigma{(\g_2)=\g_1}$. In this case, for each element in the stabilizer $\text{Stab}_{S_{N(\hat{\g})}} [\hat{\g}]$, \emph{two} permutations recover the term  $\prod_{j=1}^{N(\hat{\g})} p_{i_j}^{g_1^{(j)}} q_{i_j}^{g_2^{(j)}}$. The first is the element itself. The second is the element composed with the permutation $\sigma_*$ that has $\sigma_*(\g_2)=\g_1$. The latter permutation recovers the term from the second product in eq.~\ref{eq:diff-pop-case-prob}, with the $p$ and $q$ exchanged. Similarly, two permutations recover $\prod_{j=1}^{N(\hat{\g})} p_{i_j}^{g_1^{(j)}} q_{i_j}^{g_2^{(j)}}$. Therefore, 
\begin{align}
\label{eq:diff-pop-case-prob-N}
    C_N(\hat{\g}) & = 
    \begin{cases}
        \frac{1}{\big| \text{Stab}_{S_{N(\hat{\g})}} [\hat{\g}] \big|}, & \g_1 = \g_2, \\   \frac{1}{[1 + \mathbbm{1}_{\g_1 \sim  \g_2}]\big| \text{Stab}_{S_{N(\hat{\g})}} [\hat{\g}] \big|}, & \g_1 \neq \g_2.
\end{cases}
\end{align}
We now compute the size of the stabilizer subgroup via the matrix representation.

%%%%%%%%%%%%%%%%%%%%%%%%%%%%% Section 3.3 %%%%%%%%%%%%%%%%%%%%%%%%%%%%%%%%%%%
\subsection{Computing the stabilizer}

An element of $S_{N(\hat{\g})}$ acts on $\hat{\g} \in \mathcal{G}_I^K \times \mathcal{G}_I^K$ by permuting its nonzero columns---the first $N(\hat{\g})$ columns. To find $\big| \text{Stab}_{S_{N(\hat{\g})}} [\hat{\g}] \big|$, we count all permutations of the nonzero columns of $\hat{\g}$ that yield the same matrix.

Place the nonzero columns of matrix $\hat{\g}$ into $L$ equivalence classes $c_1, c_2, \ldots, c_L$, where columns are placed in the same class if and only if they are equal. Denote by $\{c_\ell\}$ the set of all nonzero columns in $\hat{\g}$ that equal $c_\ell$, with $\sum_{\ell=1}^L |\{c_\ell\}| = N(\hat{\g})$. The size of the stabilizer $\big| \text{Stab}_{S_{N(\hat{\g})}} [\hat{\g}] \big|$ in $C_N([\hat{\g}])$ is 
\begin{equation}\label{eq:denom-matrix-form}
    \big| \text{Stab}_{S_{N(\hat{\g})}} [\hat{\g}] \big| = \prod_{\ell=1}^L \lvert \{c_\ell\} \rvert!.
\end{equation}
The product counts ways to rearrange nonzero columns in the matrix $\hat{\g}$, amounting to relabelings of the objects, while retaining the order of the two draws.

\medskip

%%%%%%%%%%%%%%%%%%%%%%%%%%%%% Section 3.4 %%%%%%%%%%%%%%%%%%%%%%%%%%%%%%%%%%%
\subsection{Final probability expression}
\label{sec:finalprob}

Note that if $\mathbbm{1}_{\g_1=\g_2}=1$, then $\mathbbm{1}_{\g_1 \sim  \g_2}=1$. Therefore,
\begin{align}
    (1+\mathbbm{1}_{\g_1=\g_2})[1+ (\mathbbm{1}_{\g_1 \sim \g_2}-\mathbbm{1}_{\g_1=\g_2})] = 1+ \mathbbm{1}_{\g_1 \sim \g_2}.
\end{align}
Substituting our result from eq.~\ref{eq:denom-matrix-form} into our probability expression from eq.~\ref{eq:diff-pop-case-prob} and accommodating the two cases of $C_N([\hat{\g}]$ by an indicator function, we obtain the probability of an identity state.
\begin{theorem}
\label{thm:prob}
Suppose two sets of $K$ unordered items are drawn from a set of $I$ objects with replacement, one according to probability distribution $\mathbf{p}$ and the other according to probability distribution $\mathbf{q}$. The probability that the unordered pair of $K$ unordered items has identity state $[\hat{\g}]$ is 
\begin{align}
    \mathbb{P}\big[[\hat{\g}]\big] &= \frac{1}{(1+ \mathbbm{1}_{\g_1 \sim \g_2})(\prod_{\ell=1}^L \lvert \{c_\ell\} \rvert!)}{K \choose g_1^{(1)},g_1^{(2)},\ldots,g_1^{(I)}} {K \choose g_2^{(1)},g_2^{(2)},\ldots,g_2^{(I)}} \!\!\!\!\!\ \\
    & \qquad \times \sum_{\;\;\;\;\; i_1 \neq i_2 \neq \ldots \neq i_{N(\hat{\g})}} \bigg( \prod_{j=1}^{N(\hat{\g})} p_{i_j}^{g_1^{(j)}} q_{i_j}^{g_2^{(j)}} + \prod_{i=1}^{N(\hat{\g})} p_{i_j}^{g_2^{(j)}} q_{i_j}^{g_1^{(j)}} \bigg).
\end{align}
\end{theorem}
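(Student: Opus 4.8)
The plan is to assemble $\mathbb{P}\big[[\hat\g]\big]$ from the uniform-on-orderings probability by pushing it through the two quotients that define $\mathcal{C}^K_I$, one group factor at a time, collecting the relevant combinatorial weight at each stage. First I would record that a single ordered pair of ordered draws $(X_1,X_2)$ has probability $\big(\prod_{i=1}^I p_i^{g_1^{(i)}}\big)\big(\prod_{i=1}^I q_i^{g_2^{(i)}}\big)$, a quantity that depends only on the count vectors $\g_1,\g_2$; summing over the $K!/\prod_i g_1^{(i)}!$ orderings of the first draw and the $K!/\prod_i g_2^{(i)}!$ orderings of the second yields $\mathbb{P}[\g_1]\,\mathbb{P}[\g_2]$ with the two multinomial coefficients, which disposes of the $S_K$ quotient. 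Next I would account for the $S_2$ factor, the symmetry of $\Db$ under swapping the two draws: the unordered pair is represented by $\binom{\g_1}{\g_2}$ or $\binom{\g_2}{\g_1}$, and since drawing $\g_1$ from $\mathbf p$ and $\g_2$ from $\mathbf q$ is an event distinct from drawing $\g_1$ from $\mathbf q$ and $\g_2$ from $\mathbf p$ (the two coinciding precisely when $\g_1=\g_2$), I add the two probabilities and divide by $1+\mathbbm{1}_{\g_1=\g_2}$, recovering eq.~\ref{eq:row}.

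Then I would handle the $S_I$ factor. Fixing a representative of $[\hat\g]$ whose nonzero columns occupy positions $1,\dots,N(\hat\g)$, each relabeling of the $I$ objects is a reassignment of the pairs $(p_i,q_i)$ to those columns, i.e.\ a choice of distinct indices $i_1\neq i_2\neq\cdots\neq i_{N(\hat\g)}$; summing the eq.~\ref{eq:row}-type contribution over all such choices produces the summation appearing in eq.~\ref{eq:diff-pop-case-prob}. This sum, however, overcounts $\mathbb{P}\big[[\hat\g]\big]$ by the number of index assignments that yield an identical summand. By orbit--stabilizer, in the case $\g_1=\g_2$ this overcount is exactly $\big|\mathrm{Stab}_{S_{N(\hat\g)}}[\hat\g]\big|$; I would evaluate this stabilizer directly by partitioning the nonzero columns of $\hat\g$ into the classes $c_1,\dots,c_L$ of equal columns and noting that a column permutation fixes the matrix if and only if it permutes each class internally, so the stabilizer size is $\prod_{\ell=1}^L |\{c_\ell\}|!$, which is eq.~\ref{eq:denom-matrix-form}.

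The delicate point, and the step I expect to be the main obstacle to phrase cleanly, is the case $\g_1\neq\g_2$. If in addition $\g_1\sim\g_2$ — the two rows are built from the same unordered partition of $K$, equivalently $r(\g_1)=r(\g_2)$ — then a relabeling $\sigma$ with $\sigma(\g_1)=\g_2$ carries the summand $\prod_j p_{i_j}^{g_1^{(j)}}q_{i_j}^{g_2^{(j)}}$ onto the $p\leftrightarrow q$-swapped term $\prod_j p_{i_j}^{g_2^{(j)}}q_{i_j}^{g_1^{(j)}}$, which is \emph{already present} in the summand of eq.~\ref{eq:diff-pop-case-prob}. Consequently every element of $\mathrm{Stab}_{S_{N(\hat\g)}}[\hat\g]$ is paired with a second permutation (that element composed with such a $\sigma$) reproducing the same product, so the overcount doubles precisely when $\g_1\sim\g_2$; combined with the stabilizer computation this gives the two-case coefficient $C_N(\hat\g)$ of eq.~\ref{eq:diff-pop-case-prob-N}. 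The care required is purely bookkeeping: the $S_2$-divisor $1+\mathbbm{1}_{\g_1=\g_2}$ and the $\g_1\sim\g_2$-doubling both originate from row swaps but must not be conflated, since one enters at the unordered-pair stage and the other at the relabeling stage.

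Finally I would combine the pieces. Substituting eq.~\ref{eq:denom-matrix-form} into eq.~\ref{eq:diff-pop-case-prob} with $C_N(\hat\g)$ given by eq.~\ref{eq:diff-pop-case-prob-N}, the accumulated denominator is $(1+\mathbbm{1}_{\g_1=\g_2})\,[1+(\mathbbm{1}_{\g_1\sim\g_2}-\mathbbm{1}_{\g_1=\g_2})]\,\prod_{\ell=1}^L|\{c_\ell\}|!$; applying the identity $(1+\mathbbm{1}_{\g_1=\g_2})[1+(\mathbbm{1}_{\g_1\sim\g_2}-\mathbbm{1}_{\g_1=\g_2})]=1+\mathbbm{1}_{\g_1\sim\g_2}$ collapses the case distinction and yields the single factor $(1+\mathbbm{1}_{\g_1\sim\g_2})\prod_{\ell=1}^L|\{c_\ell\}|!$, which is exactly the stated formula. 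This last identity is immediate from the observation (noted in Section~\ref{sec:finalprob}) that $\mathbbm{1}_{\g_1=\g_2}=1$ forces $\mathbbm{1}_{\g_1\sim\g_2}=1$, so one only needs to check the two cases $\g_1=\g_2$ and $\g_1\neq\g_2$ separately.
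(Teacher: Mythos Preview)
Your proposal is correct and follows essentially the same route as the paper: you pass through the $S_K$, $S_2$, and $S_I$ quotients in the same order, derive eq.~\ref{eq:row}, then eq.~\ref{eq:diff-pop-case-prob} with the coefficient $C_N(\hat\g)$ of eq.~\ref{eq:diff-pop-case-prob-N}, compute the stabilizer as $\prod_\ell |\{c_\ell\}|!$ via the column-class decomposition, and collapse the case split with the indicator identity exactly as in Section~\ref{sec:finalprob}. Your treatment of the $\g_1\neq\g_2$, $\g_1\sim\g_2$ doubling is, if anything, a touch more explicit than the paper's paragraph on the same point, but the argument is the same.
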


In the case that $\mathbf{p}=\mathbf{q}$ and the two unordered draws are sampled from the same probability distribution, the probability can be simplified.

\begin{corollary}
Suppose two sets of $K$ unordered items are drawn from a set of $I$ objects with replacement, both according to probability distribution $\mathbf{p}$. The probability that the unordered pair of $K$ unordered items has identity state $[\hat{\g}]$ is 
\begin{align}
    \mathbb{P}[\hat{\g}])= \frac{2}{(1+ \mathbbm{1}_{\g_1 \sim \g_2})(\prod_{\ell=1}^L \lvert \{c_\ell\} \rvert!)}{K \choose g_1^{(1)},g_1^{(2)},\ldots,g_1^{(I)}} {K \choose g_2^{(1)},g_2^{(2)},\ldots,g_2^{(I)}}\!\!\!\!\!\sum_{\;\;\;\;\; i_1 \neq i_2 \neq \ldots \neq i_{N(\hat{\g})}}\prod_{j=1}^{N(\hat{\g})} p_{i_j}^{g_1^{(j)}+g_2^{(j)}}.
\end{align}
\end{corollary}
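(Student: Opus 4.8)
The plan is to obtain the Corollary directly from Theorem~\ref{thm:prob} by specializing $\mathbf{q}$ to $\mathbf{p}$, since the general formula already packages all of the combinatorial bookkeeping. First I would note that every factor in the expression of Theorem~\ref{thm:prob} other than the bracketed summand $\prod_{j=1}^{N(\hat{\g})} p_{i_j}^{g_1^{(j)}} q_{i_j}^{g_2^{(j)}} + \prod_{j=1}^{N(\hat{\g})} p_{i_j}^{g_2^{(j)}} q_{i_j}^{g_1^{(j)}}$ depends only on the matrix $\hat{\g}$, not on the probability vectors: the indicator $\mathbbm{1}_{\g_1 \sim \g_2}$ depends only on whether $r(\g_1) = r(\g_2)$; the multinomial coefficients depend only on the rows of $\hat{\g}$; and the stabilizer size $\prod_{\ell=1}^L \lvert\{c_\ell\}\rvert!$ depends only on the multiplicities of the nonzero columns of $\hat{\g}$. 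Consequently, setting $q_i = p_i$ for all $i$ leaves all of these factors unchanged.

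Next I would substitute $q_i = p_i$ into the summand. Each term becomes $\prod_{j=1}^{N(\hat{\g})} p_{i_j}^{g_1^{(j)}} q_{i_j}^{g_2^{(j)}} = \prod_{j=1}^{N(\hat{\g})} p_{i_j}^{g_1^{(j)} + g_2^{(j)}}$, and the second term likewise becomes $\prod_{j=1}^{N(\hat{\g})} p_{i_j}^{g_2^{(j)} + g_1^{(j)}}$, which is the same monomial. Hence the bracketed sum collapses to $2 \prod_{j=1}^{N(\hat{\g})} p_{i_j}^{g_1^{(j)} + g_2^{(j)}}$, the constant $2$ pulls out of the sum over $i_1 \neq i_2 \neq \cdots \neq i_{N(\hat{\g})}$, and collecting the remaining (unchanged) factors reproduces exactly the displayed formula in the Corollary.

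The only point deserving an explicit sentence is that the two-case form of $C_N(\hat{\g})$ in eq.~\ref{eq:diff-pop-case-prob-N} and the $1+\mathbbm{1}_{\g_1=\g_2}$ prefactor from eq.~\ref{eq:row} do not interact differently once $\mathbf{p}=\mathbf{q}$ makes the two products coincide. But this is already resolved upstream: Theorem~\ref{thm:prob} consolidated those pieces into the single factor $(1+\mathbbm{1}_{\g_1\sim\g_2})\bigl(\prod_{\ell=1}^L \lvert\{c_\ell\}\rvert!\bigr)$ using the identity $(1+\mathbbm{1}_{\g_1=\g_2})\bigl[1+(\mathbbm{1}_{\g_1\sim\g_2}-\mathbbm{1}_{\g_1=\g_2})\bigr] = 1+\mathbbm{1}_{\g_1\sim\g_2}$, which is purely algebraic and holds for all $\mathbf{p},\mathbf{q}$. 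So no double-count is created or destroyed by the specialization, and the sole residual effect of $\mathbf{p}=\mathbf{q}$ is the explicit factor $2$. I do not anticipate a genuine obstacle here; the work is limited to the substitution and the one-line consistency check just described.
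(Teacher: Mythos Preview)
Your proposal is correct and matches the paper's approach: the Corollary is stated immediately after Theorem~\ref{thm:prob} as the specialization $\mathbf{q}=\mathbf{p}$, with no separate proof given, so the intended argument is exactly the substitution you describe---the two products in the summand coincide, producing the factor $2$, while all other factors are unchanged.
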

We are now able to compute the probability of each of the $\mathcal{C}^K_{2K}$ identity states. For the cases of $K=2$ and $K=3$, these probabilities appear in Tables \ref{tab:3prob-combs-k2} and \ref{tab:4prob-combs-k3}. We continue to use the notation $\!\sum_{i_1\neq i_2}^I \!\!a_{i_1 i_2}= \sum_{i_1=1}^I \sum_{i_2=1, {i_2 \neq i_1}}^I \!\!a_{i_1 i_2}$, $\sum_{i_1\neq i_2 \neq i_3}^I \!\!a_{i_1 i_2 i_3}= \sum_{i_1=1}^I \sum_{i_2=1, {i_2 \neq i_1}}^I \sum_{i_3=1, {{i_3 \neq i_1}, i_3 \neq i_2}}^I \!\!a_{i_1 i_2 i_3}$, and so on.

%%%%%%%%%%%%%%%%%%%%%%%%%%%%% Section 4 %%%%%%%%%%%%%%%%%%%%%%%%%%%%%%%%%%%%%%%
\section{Expected dissimilarity value}

Let $\E[\Db(\mathbf{p},\mathbf{q})]$ be the expected dissimilarity between two random unordered draws with replacement as a function of our drawing probability vectors $\mathbf{p} =(p_1,p_2,\ldots,p_I)$ and $\mathbf{q} =(q_1,q_2,\ldots,q_I)$. We can compute $\E[\Db(\mathbf{p},\mathbf{q})]$ for a given $K$ by taking the dissimilarity of each identity state and its corresponding probability, as computed in Section \ref{sec:finalprob}:
\begin{align}
    \E[\Db(\mathbf{p},\mathbf{q})] = \sum_{[\hat{\g}]\in\mathcal{C}^K_I}\Db(\hat{\g}) \, \mathbb{P}\big[[\hat{\g}]\big].
\end{align}
For the $K=2$ case, the computation is equivalent to to taking the dot product of the $\Db$ and Probability columns of Table \ref{tab:3prob-combs-k2}, and for $K=3$, it is equivalent to taking the corresponding dot product in Table \ref{tab:4prob-combs-k3}. The resulting polynomials are reduced by noting $p_1+p_2+\ldots+p_I-1=0$ and $q_1+q_2+\ldots +q_I-1=0$. We obtain a general theorem.

\begin{theorem}
\label{thm:dissimilarity} For each choice of $K \geq 2$ and $I \geq 2$ and probability distributions $\mathbf{p}, \mathbf{q}$,
    \begin{align}
        \E[\Db(\mathbf{p},\mathbf{q})] = 1- \langle \mathbf{p}, \mathbf{q} \rangle.
    \end{align}
\end{theorem}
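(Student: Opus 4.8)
The plan is to bypass the identity-state enumeration entirely and compute $\E[\Db(\mathbf{p},\mathbf{q})]$ directly by linearity of expectation, using the inner-product form $\Db(\g_1,\g_2) = 1 - \frac{1}{K^2}\langle \g_1,\g_2\rangle$ established in Section~\ref{sec:notation}. Writing $\g_1 = (g_1^{(1)},\ldots,g_1^{(I)})$ for the count vector of the first draw and $\g_2$ for that of the second, linearity gives $\E[\Db(\mathbf{p},\mathbf{q})] = 1 - \frac{1}{K^2}\sum_{i=1}^I \E\!\left[g_1^{(i)} g_2^{(i)}\right]$, so it suffices to evaluate $\E[g_1^{(i)} g_2^{(i)}]$ for each $i$.

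Next I would use that the two draws are taken independently---the first from $\mathbf{p}$, the second from $\mathbf{q}$---so that $\E[g_1^{(i)} g_2^{(i)}] = \E[g_1^{(i)}]\,\E[g_2^{(i)}]$ for every $i$. Each count $g_1^{(i)}$ is a sum of $K$ indicators, one for each of the $K$ with-replacement draws, each equal to $1$ with probability $p_i$; hence $\E[g_1^{(i)}] = K p_i$, and likewise $\E[g_2^{(i)}] = K q_i$. Substituting yields $\sum_{i=1}^I \E[g_1^{(i)} g_2^{(i)}] = K^2 \sum_{i=1}^I p_i q_i = K^2 \langle \mathbf{p},\mathbf{q}\rangle$, and therefore $\E[\Db(\mathbf{p},\mathbf{q})] = 1 - \langle \mathbf{p},\mathbf{q}\rangle$, with no dependence on $K$ or $I$. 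The same-distribution case is recovered by setting $\mathbf{q} = \mathbf{p}$, giving $1 - \langle \mathbf{p},\mathbf{p}\rangle$.

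Equivalently, and perhaps more transparently, one can argue at the level of ordered draws: fix any ordering $X_1 = (X_1^1,\ldots,X_1^K)$ of the first draw and $X_2 = (X_2^1,\ldots,X_2^K)$ of the second, so that the coordinates $X_1^a$ are i.i.d.\ with law $\mathbf{p}$ and the $X_2^b$ are i.i.d.\ with law $\mathbf{q}$, independent of the $X_1$'s. Then $\sum_{1\le a,b\le K} \mathbbm{1}_{X_1^a = X_2^b} = \langle \g_1,\g_2\rangle$ regardless of the chosen orderings, and $\mathbbm{P}[X_1^a = X_2^b] = \sum_{i=1}^I p_i q_i = \langle \mathbf{p},\mathbf{q}\rangle$ for each of the $K^2$ pairs $(a,b)$; summing the indicators, taking expectations, and dividing by $K^2$ gives the claim.

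There is essentially no hard step here: the only points needing a word of justification are that the inner-product expression for $\Db$ is genuinely invariant under the choice of orderings used to represent each unordered draw (immediate from the count-vector reformulation $\Db(\g_1,\g_2) = 1 - \frac{1}{K^2}\langle \g_1,\g_2\rangle$), and that the independence of the two draws licenses the factorization $\E[g_1^{(i)} g_2^{(i)}] = \E[g_1^{(i)}]\E[g_2^{(i)}]$. The more laborious route---forming the dot product of the $\Db$ and probability columns of Tables~\ref{tab:3prob-combs-k2} and \ref{tab:4prob-combs-k3} and reducing the resulting polynomials modulo $\sum_i p_i - 1 = 0$ and $\sum_i q_i - 1 = 0$---reproduces the same answer for $K = 2$ and $K = 3$ and is a useful consistency check, but the linearity argument is what makes the general-$K$ statement transparent.
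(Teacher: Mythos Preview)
Your proof is correct and takes essentially the same approach as the paper: both arguments bypass the identity-state enumeration and use linearity of expectation to reduce the computation to the probability $\langle \mathbf{p},\mathbf{q}\rangle$ that a single draw from $\mathbf{p}$ matches a single draw from $\mathbf{q}$. Your alternative formulation via ordered draws and the double sum $\sum_{a,b}\mathbbm{1}_{X_1^a=X_2^b}$ is almost verbatim the paper's proof (which phrases the same sum through random indices $S,T$ and the law of total expectation), while your primary formulation via count vectors and the factorization $\E[g_1^{(i)}g_2^{(i)}]=\E[g_1^{(i)}]\,\E[g_2^{(i)}]=K^2 p_i q_i$ is an equivalent repackaging that sums over object types rather than position pairs.
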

\begin{proof}
    Let $G_1,G_2$ be independent random variables in $\mathcal{G}_K^I$ corresponding to our two unordered draws.
    For two instances $g_1$ of $G_1$ and $g_2$ of $G_2$, eq.~\ref{eq:dg1g2} gives 
    \begin{align*}
        \Db(g_1,g_2)=1-\frac{1}{K^2}\sum_{i=1}^K \sum_{j=1}^K \mathbbm{1}_{g_1^{(i)} = g_2^{(j)}}.
    \end{align*}
    In other words, using $S$ and $T$ for random variables corresponding to randomly selected indices in $[1,K]$,
    \begin{align*}
        \Db(g_1,g_2)= \E[\mathbbm{1}_{g_1^{(S)} \neq g_2^{(T)}}].
    \end{align*}
    
    By the law of total expectation,
    \begin{align}
    \label{eq:totalexp}
        \E[\Db(\mathbf{p}, \mathbf{q})] &= \E_{G_1,G_2}[ \Db(g_1,g_2)] = \E_{G_1,G_2} \big[\E_{S,T}[\mathbbm{1}_{G_1^S \neq G_2^T}]  \big] = \E_{G_1,G_2}\big[ \mathbb{P}[G_1^S \neq G_2^T] \big]. 
    \end{align}
    We can compute this latter expectation by summing across all outcomes for the independent random variables $G_1$, $G_2$, $S$, and $T$: 
    \begin{align}
    \label{eq:egg}
         \E_{G_1,G_2}\big[ \mathbb{P}[G_1^S \neq G_2^T] \big]
        &=  \sum_{(G_1,G_2) \in \mathcal{G}^K_I\times \mathcal{G}^K_I} \mathbb{P}[G_1=g_1] \, \mathbb{P}[G_2=g_2] \sum_{1\leq s, t \leq K}  \mathbb{P}[S\!=\!s] 
        \, \mathbb{P}[T\!=\!t] \, \mathbbm{1}_{g_1^{(s)} \neq g_2^{(t)}}.
    \end{align}
For fixed $s$ and $t$, 
    \begin{align}
    \label{eq:fixedst}
\sum_{(G_1,G_2) \in \mathcal{G}^K_I\times \mathcal{G}^K_I}  \mathbb{P}[G_1=g_1] \, \mathbb{P}[G_2=g_2] \, \mathbbm{1}_{g_1^{(s)} \neq g_2^{(t)}} =  1- \langle \mathbf{p}, \mathbf{q} \rangle, 
    \end{align}
encoding the fact that the probability that a random draw of one element from population 1 and one element from population 2 represent the same object is $\langle \mathbf{p},\mathbf{q} \rangle$.
    Therefore, applying eqs.~\ref{eq:egg} and \ref{eq:fixedst} in eq.~\ref{eq:totalexp},
    \begin{align}
        \E[\Db(\mathbf{p},\mathbf{q})] &= \sum_{1\leq s, t \leq K} \sum_{(G_1,G_2) \in \mathcal{G}^K_I\times \mathcal{G}^K_I}  \mathbb{P}[G_1=g_1] \, \mathbb{P}[G_2=g_2] \, \mathbb{P}[S\!=\!s] \, \mathbb{P}[T\!=\!t] \, \mathbbm{1}_{g_1^{(s)} \neq g_2^{(t)}} \nonumber \\
        &= \sum_{1\leq s, t \leq K} \mathbb{P}[S\!=\!s] \, \mathbb{P}[T\!=\!t] 
        \Big[\sum_{(G_1,G_2) \in \mathcal{G}^K_I\times \mathcal{G}^K_I}  \mathbb{P}[G_1=g_1] \, \mathbb{P}[G_2=g_2] \, \mathbbm{1}_{g_1^{(s)} \neq g_2^{(t)}}\Big] \nonumber \\
        &=  \sum_{1\leq s, t \leq K} \mathbb{P}[S\!=\!s] \, \mathbb{P}[T\!=\!t] \, (1- \langle \mathbf{p}, \mathbf{q} \rangle) \nonumber \\
        &= \sum_{1\leq s, t \leq K} \frac{1}{K^2} (1- \langle \mathbf{p}, \mathbf{q} \rangle) \nonumber \\
        &= 1- \langle \mathbf{p}, \mathbf{q} \rangle.  \nonumber
    \end{align}
\end{proof}

We can immediately discern the conditions under which the expected dissimilarity between draws from two different populations exceeds that of two draws from the same population.
\begin{corollary}
\label{coro:pppq} 
For each choice of $K \geq 2$ and $I \geq 2$ and probability distributions $\mathbf{p}, \mathbf{q}$,
    $\E[\Db(\mathbf{p},\mathbf{p})] \leq \E[\Db(\mathbf{p},\mathbf{q})]$ if and only if $\langle \mathbf{p}, \mathbf{q} \rangle \leq \langle \mathbf{p}, \mathbf{p} \rangle$.
\end{corollary}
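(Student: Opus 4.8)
The plan is to derive Corollary~\ref{coro:pppq} as an essentially immediate consequence of Theorem~\ref{thm:dissimilarity}, which supplies the closed form $\E[\Db(\mathbf{p},\mathbf{q})] = 1 - \langle \mathbf{p},\mathbf{q}\rangle$ for every admissible $K \geq 2$ and $I \geq 2$. The observation driving the argument is that the quantity $1 - \langle \cdot,\cdot\rangle$ is a monotone decreasing function of the inner product, so comparisons of expected dissimilarities translate directly into (reversed) comparisons of inner products.

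First I would apply Theorem~\ref{thm:dissimilarity} twice: once with the second distribution taken to be $\mathbf{p}$ itself, giving $\E[\Db(\mathbf{p},\mathbf{p})] = 1 - \langle \mathbf{p},\mathbf{p}\rangle$, and once in general, giving $\E[\Db(\mathbf{p},\mathbf{q})] = 1 - \langle \mathbf{p},\mathbf{q}\rangle$. Subtracting, one obtains
\begin{align*}
    \E[\Db(\mathbf{p},\mathbf{q})] - \E[\Db(\mathbf{p},\mathbf{p})] = \langle \mathbf{p},\mathbf{p}\rangle - \langle \mathbf{p},\mathbf{q}\rangle.
\end{align*}
Then $\E[\Db(\mathbf{p},\mathbf{p})] \leq \E[\Db(\mathbf{p},\mathbf{q})]$ holds precisely when the right-hand side is nonnegative, i.e.\ when $\langle \mathbf{p},\mathbf{q}\rangle \leq \langle \mathbf{p},\mathbf{p}\rangle$, which is the claimed equivalence. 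The ``if and only if'' is automatic because we have an exact identity, not merely an inequality, so there is nothing further to check.

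Honestly, there is no real obstacle here: the entire content was already absorbed into proving Theorem~\ref{thm:dissimilarity}, and the corollary is a one-line rearrangement. If anything, the only thing worth a remark is an interpretive sanity check --- that $\langle \mathbf{p},\mathbf{p}\rangle \geq \langle \mathbf{p},\mathbf{q}\rangle$ is \emph{not} automatic (it can fail, e.g.\ when $\mathbf{p}$ is closer to uniform than $\mathbf{q}$ on overlapping support), so the corollary genuinely identifies a nontrivial condition rather than a tautology. One could optionally note via Cauchy--Schwarz that $\langle \mathbf{p},\mathbf{q}\rangle \leq \sqrt{\langle \mathbf{p},\mathbf{p}\rangle\,\langle \mathbf{q},\mathbf{q}\rangle}$, so the condition certainly holds whenever $\langle \mathbf{q},\mathbf{q}\rangle \leq \langle \mathbf{p},\mathbf{p}\rangle$, i.e.\ whenever $\mathbf{q}$ is at least as ``spread out'' as $\mathbf{p}$ in the $\ell^2$ sense; but this is a supplementary comment and not needed for the proof itself.
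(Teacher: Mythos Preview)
Your proof is correct and follows essentially the same approach as the paper's: apply Theorem~\ref{thm:dissimilarity} twice to rewrite both sides as $1-\langle\mathbf{p},\mathbf{p}\rangle$ and $1-\langle\mathbf{p},\mathbf{q}\rangle$, then read off the equivalence. The supplementary Cauchy--Schwarz remark is extra commentary not present in the paper, but it does not affect the validity of the argument.
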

\begin{proof}
We apply Theorem \ref{thm:dissimilarity} twice, finding that  $\E[\Db(\mathbf{p},\mathbf{p})] \leq \E[\Db(\mathbf{p},\mathbf{q})]$ is equivalent to
$ 1 - \langle \mathbf{p}, \mathbf{p} \rangle \leq 1 - \langle \mathbf{p}, \mathbf{q} \rangle $.
\end{proof}
The corollary clarifies that there exist probability distributions for which the expected dissimilarity between draws from the same probability distribution exceeds that of draws from different probability distributions. For example, for $\mathbf{p} = (0.8, 0.2, 0, \ldots, 0)$ and $\mathbf{q} = (0.9, 0.1, 0, \ldots, 0)$, 
\begin{align*}
    \E[\Db(\mathbf{p},\mathbf{p})] = 1- 0.68 = 0.32 \geq 0.26 = \E[\Db(\mathbf{p},\mathbf{q})].
\end{align*}
However, we do find that the expected dissimilarity for draws from distinct distributions is greater than or equal to that of at least one of the two constituent distributions. In particular, we have the following corollary.
\begin{theorem}
\label{thm:avg}
    For each choice of $K\geq 2$ and $I \geq 2$ and probability distributions $\mathbf{p}$, $\mathbf{q}$,
       \begin{align*}
           \frac{1}{2}\big(\E[\Db(\mathbf{p},\mathbf{p})] + \E[\Db(\mathbf{q},\mathbf{q})]\big) \leq \E[\Db(\mathbf{p},\mathbf{q})],
       \end{align*}
with equality if and only if $\mathbf{p}=\mathbf{q}$.
\end{theorem}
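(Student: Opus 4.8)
The plan is to reduce the statement to the elementary fact $\lVert \mathbf{p}-\mathbf{q}\rVert^2 \ge 0$ by invoking Theorem \ref{thm:dissimilarity}. First I would apply that theorem three times: to the pair $(\mathbf{p},\mathbf{p})$, obtaining $\E[\Db(\mathbf{p},\mathbf{p})] = 1-\langle\mathbf{p},\mathbf{p}\rangle$; to $(\mathbf{q},\mathbf{q})$, obtaining $\E[\Db(\mathbf{q},\mathbf{q})] = 1-\langle\mathbf{q},\mathbf{q}\rangle$; and to $(\mathbf{p},\mathbf{q})$, obtaining $\E[\Db(\mathbf{p},\mathbf{q})] = 1-\langle\mathbf{p},\mathbf{q}\rangle$. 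Substituting these into the claimed inequality and cancelling the common constant term, the assertion $\tfrac12(\E[\Db(\mathbf{p},\mathbf{p})] + \E[\Db(\mathbf{q},\mathbf{q})]) \le \E[\Db(\mathbf{p},\mathbf{q})]$ becomes equivalent to $\langle\mathbf{p},\mathbf{q}\rangle \le \tfrac12\big(\langle\mathbf{p},\mathbf{p}\rangle + \langle\mathbf{q},\mathbf{q}\rangle\big)$.

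Next I would rearrange this, using bilinearity of the standard inner product, into the form
\begin{align*}
0 \le \langle\mathbf{p},\mathbf{p}\rangle - 2\langle\mathbf{p},\mathbf{q}\rangle + \langle\mathbf{q},\mathbf{q}\rangle = \langle \mathbf{p}-\mathbf{q},\, \mathbf{p}-\mathbf{q}\rangle = \lVert \mathbf{p}-\mathbf{q}\rVert^2 .
\end{align*}
Since the right-hand side is a sum of squares of the coordinates $p_i - q_i$, it is manifestly nonnegative, which proves the inequality. For the equality clause, positive-definiteness of the inner product gives $\lVert \mathbf{p}-\mathbf{q}\rVert^2 = 0$ precisely when $p_i = q_i$ for all $i$, i.e.\ when $\mathbf{p}=\mathbf{q}$; because every step in the chain of rewritings above is a logical equivalence, equality in the theorem holds exactly in that case.

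There is essentially no obstacle here once Theorem \ref{thm:dissimilarity} is in hand: all the substance is contained in that theorem, and what remains is the observation that $\mathbf{v}\mapsto\lVert\mathbf{v}\rVert^2$ is a nonnegative quadratic form vanishing only at $\mathbf{0}$ (equivalently, the bound $2\langle\mathbf{p},\mathbf{q}\rangle \le \lVert\mathbf{p}\rVert^2+\lVert\mathbf{q}\rVert^2$, tight exactly on the diagonal). The one point deserving a sentence of care is making explicit that the reduction steps are reversible, so that the ``if and only if'' in the equality statement is genuinely justified rather than merely the forward implication.
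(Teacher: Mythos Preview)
Your proposal is correct and is essentially the same argument the paper gives: apply Theorem~\ref{thm:dissimilarity} three times and reduce to $-\tfrac{1}{2}\langle \mathbf{p}-\mathbf{q},\mathbf{p}-\mathbf{q}\rangle \le 0$, with equality iff $\mathbf{p}=\mathbf{q}$. The only cosmetic difference is that the paper computes the difference of the two sides directly rather than first stating the equivalent inner-product inequality.
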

\begin{proof}
We apply Theorem \ref{thm:dissimilarity} three times, finding
    \begin{align*}
        \frac{1}{2}\big(\E[\Db(\mathbf{p},\mathbf{p})] + \E[\Db(\mathbf{q},\mathbf{q})]\big) - \E[\Db(\mathbf{p},\mathbf{q})] &= -\frac{1}{2} \langle \mathbf{p}, \mathbf{p} \rangle - \frac{1}{2} \langle \mathbf{q}, \mathbf{q} \rangle + \langle \mathbf{p}, \mathbf{q} \rangle\\
        &= - \frac{1}{2} \langle \mathbf{p}-\mathbf{q}, \mathbf{p}-\mathbf{q} \rangle\\
        &\leq 0.
\end{align*}
Equality holds if and only if $\mathbf{p}=\mathbf{q}$.
\end{proof}
As a result of the theorem, $\E[\Db(\mathbf{p},\mathbf{q})]$ is always greater than or equal to at least one of the two quantities $\E[\Db(\mathbf{p},\mathbf{p})]$, $\E[\Db(\mathbf{q},\mathbf{q})]$. Theorems \ref{thm:dissimilarity} and \ref{thm:avg} and Corollary \ref{coro:pppq} generalize corresponding results obtained by \cite{Liu2023} in the $K=2$ case, showing that as a measure of genetic differentiation between a pair of populations, $\Db$ does not depend on the ploidy $K$.

%%%%%%%%%%%%%%%%%%%%%%%%%%%%% Section 5 %%%%%%%%%%%%%%%%%%%%%%%%%%%%%%%%%%%%%%%
\section{Discussion}

We have examined the problem of measuring dissimilarity between two random, unordered size-$K$ draws with replacement from a set of $I$ objects. The problem considers draws from different probability vectors, relying on a dissimilarity measure that assesses their identity by considering every pairing of elements, one from one draw and one from the other. Via quotients by group actions, we have enumerated the ``identity states'' that describe configurations of identity and non-identity among the objects drawn (Section \ref{sec:enumeration}). We have also calculated the probability of each identity state (Theorem~\ref{thm:prob}). For the particular dissimilarity measure we have considered, we have shown that although the identity state probabilities are, in many cases, relatively complicated expressions, the expectation of the dissimilarity is a simple function of the starting probability vectors (Theorem~\ref{thm:dissimilarity}). This result generalizes an earlier result for $K=2$~\citep[eq.~22]{Liu2023}. 

We have shown that as in the case of $K=2$, for $K > 2$, it is possible for two random draws from the same probability vector to possess greater expected dissimilarity than do two draws from different vectors (Corollary~\ref{coro:pppq}). Nevertheless, the expected dissimilarity for different probability vectors $\mathbf{p}$ and $\mathbf{q}$ is bounded below by one of the expected dissimilarities taking both draws from the same vector, either  $\mathbf{p}$ or $\mathbf{q}$; in particular, we have $\frac{1}{2}(\E[\Db(\mathbf{p},\mathbf{p})] + \E[\Db(\mathbf{q},\mathbf{q})]) \leq \E[\Db(\mathbf{p},\mathbf{q})]$ (Theorem~\ref{thm:avg}). 

In the population-genetic motivation for the problem, $\E[\Db(\mathbf{p},\mathbf{q})]$ measures the dissimilarity between two populations at a genetic locus given vectors of the population allele frequencies, and it provides a measure of intrapopulation genetic variation in the case of $\mathbf{p}=\mathbf{q}$. Our expected dissimilarity (Theorem~\ref{thm:dissimilarity}) indicates that a dissimilarity designed for comparing polyploid genotypes ($K>2$) produces an expectation as simple as that obtained for one comparing diploid genotypes ($K=2$). The result contributes to the development of population-genetic statistics specifically for polyploids~\citep{RosenbergAndCalabrese04, ObbardEtAl06, FalushEtAl07, MeirmansLiu18, MeirmansEtAl18, YangEtAl21}.

As part of our derivations, we have enumerated a class of identity states for an unordered pair of $K$ unordered samples with replacement from a set of $I$ objects. In population genetics, identity states are useful for understanding diverse features of the transmission of alleles, relatedness of pairs of individuals, and properties of genetic identity; they have been most frequently considered for pairs of diploid individuals, corresponding to the case of $K=2$, with $I \geq 4$~\citep{Jacquard74, Thompson1974, Lange02}. Familiar concepts include the 15 identity states possible for an ordered pair of ordered diploid genotypes, and the 9 ``condensed'' identity states possible for an ordered pair of unordered diploid genotypes. These 9 states are used for the scenario in which alleles paternally and maternally transmitted from parent to offspring are not distinguished in a diploid offspring individual. If the two individuals in a pair are unordered, then the 9 condensed identity states collapse to our 7 states for unordered pairs of unordered diploid genotypes (Table~\ref{tab:3prob-combs-k2}). Generalizations of identity state concepts beyond two size-2 draws have focused on larger sets of draws, as would be relevant for multiple diploid individuals~\citep{Thompson1974, Karigl82}, rather than on draws of larger size, as are relevant for pairs of polyploid individuals. Our results on polyploid identity states provide a direction for generalization of classic aspects of genetic identity configurations.

Although the motivating scenario is from population genetics, we have described our results in the more general context of random, unordered draws. For example, consider two infinite stacks of playing cards, where each stack has an associated probability vector for drawing a card of type $A_i$. Each of two players draws $K$ cards to form a hand, with player 1 drawing from one stack and player 2 from the other. Compute the dissimilarity between the two hands via $\Db$. Theorem \ref{thm:dissimilarity} finds that the expectation of this dissimilarity is the same as if only one card was drawn from each stack. Suppose the game is structured so that player 1 seeks to maximize the probability of a card match with player 2 by assembling a deck of cards whose card proportions are specified in advance of the game. Corollary \ref{coro:pppq} finds that accomplishing this objective does not necessarily require matching the card probabilities in stack 1 to those of stack 2.

Several open questions remain.  The algorithm for enumerating the identity states (Section \ref{sec:enumeration}) is not very instructive; in effect, it amounts to reducing a space of nonnegative-integer matrices according to our group actions. We also have not determined a closed-form expression, generating function, or asymptotics for $|\mathcal{C}^K_I|$. Additionally, although we did find that the expected dissimilarity can be greater for two draws with identical probability distributions than for those with distinct distributions (Corollary \ref{coro:pppq}), we conjecture that this scenario is unlikely across potential pairs of probability vectors. In the $K=2$ case, the probability that $\E[\Db(\mathbf{p},\mathbf{p})]$ exceeds $\E[\Db(\mathbf{p},\mathbf{q})]$ decreases with increasing $I$, the number of nonzero-frequency objects~\citep[Figure 5]{Liu2023}. This probability and its limit can potentially also be analyzed for $K>2$.

%%%%%%%%%%%%%%%%%%%%%%%%%%% End matter %%%%%%%%%%%%%%%%%%%%%%%%%%%%%%%%%%%%%%%%%%%
\vskip .3cm
\noindent {\bf Acknowledgments.} We acknowledge support from National Institutes of Health grant R01 HG005855.

{\small
\bibliographystyle{abbrvnat}
\bibliography{references}
}

\end{document}